  \def\cup{\cupprod}
  \def\cap{\capprod}
  \def\bigcup{\bigcupprod}
  \def\bigcupdisjoint{\mathop{\kern10pt\raisebox{4pt}{$\cdot$}\kern-12pt\bigcup}\limits}
\numberwithin{equation}{section}
\newtheoremstyle{ttheorem}%
       {1.8ex\@plus1ex}                
       {2.1ex\@plus1ex\@minus.5ex}      
       {\itshape}           
       {0pt}                   
       {\bfseries}          
       {.}                  
       {.5em}               
       {}                
\newtheoremstyle{ddefinition}%
       {1.8ex\@plus1ex}                
       {2.1ex\@plus1ex\@minus.5ex}      
       {}           
       {0pt}                   
       {\bfseries}           
       {.}                  
       {.5em}               
       {}                
\newtheoremstyle{rremark}%
       {1.8ex\@plus1ex}                
       {2.1ex\@plus1ex\@minus.5ex}      
       {\normalfont}        
       {0pt}                   
       {\bfseries}           
       {.}                  
       {.5em}               
       {}                   
\theoremstyle{ttheorem}
\newtheorem{theorem}{Theorem}[section]
\newtheorem{lemma}[theorem]{Lemma}
\newtheorem{proposition}[theorem]{Proposition}
\newtheorem{corollary}[theorem]{Corollary}
\theoremstyle{ddefinition}
\theoremstyle{rremark}
\newtheorem{remark}[theorem]{Remark}
\newtheorem{myremarks}[theorem]{Remarks}
\newtheorem{myexamples}[theorem]{Examples}
\newenvironment{remarks}{\begin{myremarks}\begin{nummer}}%
    {\end{nummer}\end{myremarks}}
    {\end{nummer}\end{myexamples}}
\newcounter{numcount}
\newcommand{\labelnummer}{(\roman{numcount})}%
\providecommand{\showkeyslabelformat}[1]{\relax}        
\let\mysaveformat\showkeyslabelformat                   %
\def\myformat#1{\raisebox{-1.5ex}{\mysaveformat{#1}}}   %
\newenvironment{nummer}%
  {\let\curlabelspeicher\@currentlabel%
    \begin{list}{\textup{\labelnummer}}%
      {\usecounter{numcount}\leftmargin0pt%
        \topsep0.5ex\partopsep2ex\parsep0pt\itemsep0ex\@plus1\p@%
        \labelwidth2.5em\itemindent3.5em\labelsep1em%
      }%
    \let\saveitem\item%
    \def\item{\saveitem%
      \def\@currentlabel{\curlabelspeicher\kern.1em\labelnummer}}%
    \let\savelabel\label%
    \def\label##1{{\ifnum\thenumcount=1\let\showkeyslabelformat\myformat\fi\savelabel{##1}}%
										{\def\@currentlabel{\labelnummer}%
									 	\let\showkeyslabelformat\@gobble
									 	\savelabel{##1item}%
										}%
	   							}%
  }{\end{list}}%
\let\curlabelspeicher\@currentlabel%
    \let\saveitem\item%
    \def\item{\saveitem%
      \def\@currentlabel{\curlabelspeicher\kern.1em\labelnummer}}%
    \let\savelabel\label%
    \def\label##1{{\ifnum\thenumcount=1\let\showkeyslabelformat\myformat\fi\savelabel{##1}}%
										{\def\@currentlabel{\labelnummer}%
									 	\let\showkeyslabelformat\@gobble
									 	\savelabel{##1item}%
										}%
    							}%
\def\section{\@startsection{section}{1}%
  \z@{1.3\linespacing\@plus\linespacing}{.5\linespacing}%
  {\normalfont\bfseries\centering}}
\def\subsection{\@startsection{subsection}{2}%
  \z@{.8\linespacing\@plus.5\linespacing}{-1em}%
  {\normalfont\bfseries}}
\def\nlsubsection{\@startsection{subsection}{2}%
  \z@{.8\linespacing\@plus.5\linespacing}{.1ex}%
  {\normalfont\bfseries}}
\let\@afterindenttrue\@afterindentfalse%
\renewenvironment{proof}[1][\proofname]{\par \normalfont
  \topsep6\p@\@plus6\p@ \trivlist 
  \item[\hskip\labelsep\scshape
    #1\@addpunct{.}]\ignorespaces
}{%
  \qed\endtrivlist
}
\def\ps@firstpage{\ps@plain
  \def\@oddfoot{\normalfont\scriptsize \hfil\thepage\hfil
     \global\topskip\normaltopskip}%
  \let\@evenfoot\@oddfoot
  \def\@oddhead{
    \begin{minipage}{\textwidth}
      \normalfont\scriptsize
      \emph{\insertfirsthead}
    \end{minipage}}
  \let\@evenhead\@oddhead 
}
\def\insertfirsthead{}
\def\@cite#1#2{{%
 \m@th\upshape\mdseries[{#1}{\if@tempswa, #2\fi}]}}
\renewcommand{\H}{\mathcal{H}}
\newcommand{\C}{\mathbb{C}}
\newcommand{\N}{\mathbb{N}}
\newcommand{\R}{\mathbb{R}}
\renewcommand{\le}{\leqslant}
\DeclareMathOperator{\tr}{tr}
\DeclareMathOperator{\dist}{dist}
\providecommand{\wtilde}[1]{\widetilde{#1}}
\providecommand{\bigcupdisjoint}{\mathop{\kern7pt\raisebox{6pt}{$\cdot$}\kern-9.5pt\bigcup}\limits}
\providecommand{\Bigparens}[1]{\Bigl(#1\Bigr)}
\DeclareMathOperator{\Borel}{Borel}      
\newcommand{\1}{1}
\newcommand{\upd}{\mathrm{d}}
\renewcommand{\d}{\upd}   
\newcommand{\dx}{\d x}
\newcommand{\dy}{\d y}
\DeclareMathOperator{\ran}{ran}          
\newcommand{\hairspace}{\kern .04167em}
\renewcommand{\S}{\mathcal{S}}
\DeclareMathOperator{\TextIm}{Im}
\renewcommand{\Im}{\TextIm}
\def\clap#1{\hbox to 0pt{\hss#1\hss}}
\def\bra{\makeatletter\@ifstar\@bra\@@bra}
\def\@bra#1{\hairspace #1\>}
\def\@@bra#1{\lvert\@bra{#1}}
\def\ket{\makeatletter\@ifstar\@ket\@@ket}
\def\@ket#1{\<#1\hairspace}
\def\@@ket#1{\@ket{#1}\rvert}
\newcommand{\beq}{\begin{equation}}
\newcommand{\eeq}{\end{equation}}
\newcommand{\id}{\mathbf{1}}
\begin{document}

\title[Fredholm determinants related to spectral projections]{On an integral formula for Fredholm determinants related to pairs of spectral projections}

\author{Martin Gebert}
\address{King's College London\\Department of Mathematics\\ Strand London\\ WC2R 2LS, UK}

\curraddr{ \\ Queen Mary University of London\\ School of Mathematical Sciences\\ London, E1 4NS,
UK}
\thanks{M.G. was  supported by the DFG under grant GE 2871/1-1.}
\email{m.gebert@qmul.ac.uk}

\keywords{Differences of Spectral Projections, Fredholm Determinants, Spectral Shift Function,
Subspace Perturbation Problem}
\subjclass{Primary 47A55; Secondary 15A15}

\begin{abstract}
We consider Fredholm determinants of the form identity minus product of spectral projections corresponding to isolated parts of the spectrum of a pair of self-adjoint operators.  We show an identity relating such determinants to an integral over the spectral shift function in the case of a rank-one perturbation. More precisely, we prove
$$
-\ln \left(\det \big(\mathbf{1} -\mathbf{1} _{I}(A) \mathbf{1}_{\mathbb R\backslash I}(B)\mathbf{1}_{I}(A)\big) \right)
=
\int_I \text{d} x \int_{\mathbb R\backslash I} \text{d} y\, \frac{\xi(x)\xi(y)}{(y-x)^2},
$$
where $\mathbf{1}_J (\cdot)$ denotes the spectral projection of a self-adjoint operator on a set $J\in \text{Borel}(\mathbb R)$.
The operators $A$ and $B$ are self-adjoint, bounded from below and differ by a rank-one perturbation and $\xi$ denotes the corresponding spectral shift function. The set $I$ is a union of intervals on the real line such that its boundary lies in the resolvent set of $A$ and $B$ and such that the spectral shift function vanishes there i.e. $I$ contains isolated parts of the spectrum of $A$ and $B$. We apply this formula to the subspace perturbation problem. 
\end{abstract}

\maketitle

\section{Introduction}

In this paper we study Fredholm determinants of products of spectral projections corresponding to isolated parts of the spectrum of self-adjoint operators $A$ and $B$ which differ by a rank-one operator.  We are interested in Fredholm determinants of the form 
\beq\label{intro:eq3}
\det\big(\id - \id_I(A) \id_{{\R\backslash I}}(B) \id_I(A)\big)
\eeq
where $\id_J(\cdot)$ denotes the spectral projection of a self-adjoint operator corresponding to a set $J\in \Borel(\R)$. $I$ is a finite union of intervals such that its boundary lies in the intersection of the resolvent sets, i.e. $\partial I\subset \varrho(A)\cap\varrho(B)$. This together with a trace-class assumption on $B-A$ ensures that $\id_I(A) \id_{{\R\backslash I}}(B) \id_I(A)$ is trace-class and \eqref{intro:eq3} is actually well-defined. 
We investigate in this paper if there is an elementary integral representation for the determinant \eqref{intro:eq3}. The answer is yes in the case of a \textit{rank-one perturbation} and it is given in terms of a rather simple integral depending on the \textit{spectral shift function} $\xi$ of the pair $A$ and $B$. More precisely, we prove in Theorem~\ref{thm:main} the integral representation
\beq\label{intro:thm1:eq2a}
-\ln \left(\det \big(\id - \id_{I}(A) \id_{{\R\backslash I}}(B)\id_{I}(A)\big) \right)
=
\int_I \d x \int_{{\R\backslash I}} \d y \frac{\xi(x)\xi(y)}{(y-x)^2},
\eeq
where we assume that $B-A$ is \textit{rank one} and that the spectral shift function vanishes on $\partial I$. To the best of the author's knowledge this identity is new. Apart from products of spectral projections, \eqref{intro:thm1:eq2a}  directly implies a similar result for differences of spectral projections $\id_I(A) -\id_I(B)$, see Corollary \ref{corollary2}.

In the literature, 
such determinants are sometimes called section determinants and are studied for example in \cite{MR1694668, MR2121573}. There the Fredholm determinant \eqref{intro:thm1:eq2a} is computed abstractly in terms of solutions of particular operator-valued Wiener-Hopf equations. However, apart from existence results the solutions to this equations are not found explicitly. 

Over the last years further interest in Fredholm determinants of the form \eqref{intro:eq3} emerged in mathematical physics, see \cite{artAOC2014KOS,MR3212877,artAOC2015GKMO,MR3341967,MR3376020,DGM}. 
The determinant \eqref{intro:eq3} appears when computing the thermodynamic limit of the scalar product of two non-interacting fermionic many-body ground states filled up to the Fermi energy $E\in \R$. In this case, the underlying one-particle operators are given by a pair of Schr\"odinger operators whose difference is relatively trace class, i.e. both systems differ only locally. The identity  \eqref{intro:thm1:eq2a} gives a tool to compute this many-body scalar product explicitly if the one-particle operators differ by a rank-one perturbation and
 the Fermi energy lies in a spectral gap. This might be the case for periodic Schr\"odinger operators or Schr\"odinger operators with constant magnetic fields. 
 We will not go into more details about this problem here and refer to the aforementioned papers for further reading. 

Apart from the latter motivation, we apply the integral formula to the subspace perturbation problem. This constitutes of finding a bound on the operator norm of the difference of certain pairs of spectral projections. We refer to Section \ref{sec:subspace} for further explanations.

\section{Model and results}
Let $\H$ be a separable Hilbert space and $A$ be a self-adjoint operator on $\H$ which is bounded from below, i.e. there exists a constant $c\in \R$ such that $A> c$.  Let $B$ be a rank-one perturbation of $A$, i.e. we define
\begin{equation}\label{determinant:def1}
 B:=A+V,
\end{equation}
where $V:=|\phi\> \< \phi|$ for some $\phi\in\H$. Throughout, we use the notation $\sigma(\cdot)$ for the spectrum and $\varrho(\cdot)$ for the resolvent set of an operator and we write $\id$ for the identity operator on $\H$. Moreover, we denote by  $\partial J$ the boundary of a set $J\in \Borel(\R)$. 
We write $\S^1$ for the set of all trace-class operators on $\H$. 

The spectral shift function of the pair $A$ and $B$ is defined by the limit
\beq\label{SSF:def}
\xi(E) := \lim_{\varepsilon\to 0} \frac 1 \pi \text{arg}  \Big(1+\big\<\phi, \frac 1 {A-E-i\varepsilon} \phi\big\>\Big),
\eeq
where $\text{arg}(z)\in [0,2\pi)$ denotes the argument of a complex number $z\in \C$. The latter limit is well-defined and exists for Lebesgue almost all $E\in\R$, see  \cite[Sec. 11]{MR2154153}. The non-negativity of $\lim_{\varepsilon\to 0}\Im \big\<\phi, \frac 1 {A-E-i\varepsilon} \phi\big\> \geq 0$ implies that $0\leq\xi\leq 1$. Additionally, standard results provide that $\xi\in L^1(\R)$ with norm $\|\xi\|_{L^1(\R)}= \int_\R \d x\, \xi(x) = \tr (V)= \<\phi,\phi\>$. A priori the spectral shift function is defined only for Lebesgue almost all $E\in\R$. However, the spectral shift function is defined for all energies $E\in \varrho(A)\cap \rho(B)\cap \R$ and is given for such $E$ by
\beq\label{SSF:resolvent}
\xi(E):= \tr \big(\id_{(-\infty,E)}(A) - \id_{(-\infty,E)}(B)\big),
\eeq
where the later difference is trace-class, and $\xi(E)\in \{0,1\}$, see e.g. \cite[Sec. 11]{MR2154153}. In particular, $\xi$ is constant with values in $\{0,1\}$ on any connected component of $\rho(A)\cap\rho(B)\cap \R$. 
The definition \eqref{SSF:def} of the spectral shift function is specific to rank-one perturbations. Further properties in the rank-one case can be found in \cite[Sec. 11]{MR2154153}, whereas we refer to \cite{MR1180965} for the general theory beyond rank-one perturbations. 

The second central quantity in this paper is the Fredholm determinant. We briefly recall the definition. For an operator $K\in \S^1$ with eigenvalues $(c_n)_{n\in\N}$, listed according to their algebraic multiplicities, we define the Fredholm determinant of $\id -K$ by the product
\beq
\det \big( \id - K\big) := \prod_{n\in\N} (1 - c_n).
\eeq
The latter product is well defined by the trace-class assumption on $K$, see \cite[Sect.\ XIII.17]{MR0493421}. Fredholm determinants share most properties with the usual determinant in finite dimensional spaces. Especially, we will need the multiplicativity of Fredholm determinants. 

Now, we are ready to state the main result of the paper, which relates Fredholm determinants of products of spectral projections corresponding to isolated spectral subsets to a particular integral over the spectral shift function:

\begin{theorem}\label{thm:main}
Let $N\in\N$ and 
\beq\label{defI}
I:=\bigcup_{i=1}^N[E_{2i-1},E_{2i}]
\eeq
where 
$-\infty\leq E_1\leq E_2<... <E_{2N-1}\leq E_{2N}\leq\infty$.
We further assume that the boundary of $I$ satisfies
\beq\label{assum:specgap}
\partial I\subset \rho(A)\cap \rho(B)
\eeq
and that the spectral shift function vanishes on $\partial I$, i.e. for all $x\in \partial I$
\beq\label{thm:assump}
\xi(x) = 0.
\eeq
Then, $ \id_{I}(A) \id_{I^c}(B)\id_{I}(A)\in \S^1$ and the identity 
\beq\label{thm1:eq2a}
-\ln \left(\det \big(\id - \id_{I}(A) \id_{I^c}(B)\id_{I}(A)\big) \right)
=
\int_I \d x \int_{I^c} \d y \frac{\xi(x)\xi(y)}{(y-x)^2}
\eeq
holds, where $I^c:=\R\backslash I$. 
\end{theorem}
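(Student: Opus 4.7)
The plan is to reduce to the cyclic subspace of $\phi$, prove the identity in the finite-rank case via an explicit Aronszajn--Donoghue computation, and then extend to the general case by approximation. First I reduce to the cyclic subspace $\H_\phi$ of $A$ generated by $\phi$: since $V = |\phi\>\<\phi|$ vanishes on $\H_\phi^\perp$ and $\H_\phi$ is $A$-invariant, $A|_{\H_\phi^\perp} = B|_{\H_\phi^\perp}$, so $\id_I(A) = \id_I(B)$ on $\H_\phi^\perp$ and $\id_I(A)\id_{I^c}(B)\id_I(A)$ annihilates that subspace. By the spectral theorem I identify $\H_\phi$ with $L^2(\R,\d\mu)$, where $\mu$ is the spectral measure of $A$ at $\phi$, so that $A$ becomes multiplication by $x$, $\phi$ corresponds to $\one$, and $B$ is multiplication by $x$ plus $|\one\>\<\one|$. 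The trace-class property $\id_I(A)\id_{I^c}(B)\id_I(A) \in \S^1$ follows from $\partial I \subset \rho(A)\cap\rho(B)$ and standard perturbation theory: Birman--Krein yields $\id_I(A)-\id_I(B)\in\S^1$, and one writes $\id_I(A)\id_{I^c}(B)\id_I(A) = -\id_I(A)(\id_I(B)-\id_I(A))\id_I(A)$.

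In the finite-rank case $\mu = \sum_{k=1}^n |\phi_k|^2\delta_{\alpha_k}$ with $\alpha_1<\cdots<\alpha_n$, Aronszajn--Donoghue gives the eigenvalues $\beta_1<\cdots<\beta_n$ of $B|_{\H_\phi}$ as the zeros of the perturbation determinant $F(z) = 1 + \sum_k |\phi_k|^2/(\alpha_k-z) = \prod_k (z-\beta_k)/(z-\alpha_k)$, interlacing with the $\alpha_k$, with normalized eigenvectors $v_j = c_j\sum_k\bar\phi_k(\alpha_k-\beta_j)^{-1}w_k$, $|c_j|^{-2} = F'(\beta_j)$, where $\{w_k\}$ are the $A$-eigenvectors. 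On $\rho(A)\cap\rho(B)$ one has $\xi(x) = \sum_i\one_{(\alpha_i,\beta_i)}(x)$, so the hypothesis $\xi|_{\partial I} = 0$ forces each interlacing pair $(\alpha_i,\beta_i)$ to lie entirely in $I$ or entirely in $I^c$. Setting $\mathcal P := \{i : (\alpha_i,\beta_i)\subset I\}$, the overlap matrix $U_{ij} := \<w_i,v_j\> = c_j\bar\phi_i/(\alpha_i-\beta_j)$ is unitary on $\C^n$, and restricting $UU^* = \id$ to the block $(i,i') \in \mathcal P\times\mathcal P$ shows that on $\mathrm{span}\{w_i : i\in\mathcal P\}$ the operator $\id - \id_I(A)\id_{I^c}(B)\id_I(A)$ equals $U_{\mathcal P\mathcal P}U_{\mathcal P\mathcal P}^*$. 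Consequently $\det(\id - \id_I(A)\id_{I^c}(B)\id_I(A)) = |\det U_{\mathcal P\mathcal P}|^2$; the Cauchy determinant formula applied to $U_{\mathcal P\mathcal P}$, combined with $|\phi_i|^2 = -\mathrm{Res}_{z=\alpha_i}F$ and $|c_j|^2 = 1/F'(\beta_j)$ read off the product form of $F$, yields after algebraic simplification
\[
\det\bigparens{\id - \id_I(A)\id_{I^c}(B)\id_I(A)} = \prod_{i\in\mathcal P,\,j\in\mathcal P^c}\frac{(\alpha_j-\beta_i)(\beta_j-\alpha_i)}{(\alpha_j-\alpha_i)(\beta_j-\beta_i)}.
\]
Matching this against \eqref{thm1:eq2a}, the right-hand side decouples into a finite sum over $(i,j)\in\mathcal P\times\mathcal P^c$ of elementary integrals $\int_{\alpha_i}^{\beta_i}\d x\int_{\alpha_j}^{\beta_j}\d y\,(y-x)^{-2} = \ln[(\beta_j-\beta_i)(\alpha_j-\alpha_i)/((\beta_j-\alpha_i)(\alpha_j-\beta_i))]$, and taking the negative exponential recovers precisely the product above.

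The general case follows by approximation: take finitely supported $\mu_n$ converging weakly to $\mu$, yielding finite-rank operators $A_n,B_n$ where the identity holds, and pass to the limit using trace-norm continuity of the Fredholm determinant on the left and dominated convergence on the right. The double integral converges because the hypothesis $\partial I \subset \rho(A)\cap\rho(B)$, combined with local constancy of $\xi$ on the resolvent set, forces $\xi$ to vanish in a full neighborhood of $\partial I$, taming the $(y-x)^{-2}$ singularity. The main obstacle will be the closed-form evaluation of $|\det U_{\mathcal P\mathcal P}|^2$: showing that the Cauchy determinant together with the Aronszajn--Donoghue identities collapses precisely to the product over off-block pairs $(i,j)\in\mathcal P\times\mathcal P^c$ is a delicate, purely algebraic step. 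The approximation step is largely routine, requiring only standard $L^1$-continuity of the spectral shift function in the trace-norm topology.
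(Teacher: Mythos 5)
Your finite-rank computation is, in substance, the paper's own: your overlap matrix $U_{ij}=c_j\bar\phi_i/(\alpha_i-\beta_j)$ is the eigenvector identity $\<\varphi_j,\psi_k\>=\<\varphi_j,\phi\>\<\phi,\psi_k\>/(b_k-a_j)$, the identification of the Fredholm determinant with $|\det U_{\mathcal P\mathcal P}|^2$ is Lemma~\ref{Fredholm:Repr}, the Cauchy determinant combined with the residue identities for $|\phi_i|^2$ and $|c_j|^2$ reproduces Lemmas~\ref{prod:thm1} and~\ref{cor:det}, and the matching of the resulting product with the elementary double integrals is identical. That part is correct and I have no quarrel with it.

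The genuine gap is the approximation step, which you declare ``largely routine''; it is where the paper does most of its work, and your sketch as written would fail. First, to invoke the finite-rank identity for the approximants $(A_n,B_n)$ you must verify that \emph{they} satisfy the hypotheses of the theorem: $\partial I\subset\rho(A_n)\cap\rho(B_n)$ and $\xi_n=0$ on $\partial I$, i.e.\ no interlacing pair $(\alpha_i^{(n)},\beta_i^{(n)})$ straddles a boundary point of $I$. A generic finitely supported $\mu_n$ converging weakly to $\mu$ gives no such guarantee --- eigenvalues of $A_n$ or $B_n$ can land inside the gap, and then your block decomposition $\mathcal P\times\mathcal P^c$ simply does not exist. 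The paper constructs its approximants as $A_M=\wtilde A_M\id_{\sigma_\varepsilon(A)}(\wtilde A_M)$ precisely to force the gap for $A_M$, and still needs a separate compactness argument (Lemma~\ref{lemSSF}) to show that for large $M$ the operator $B_M$ has no spectrum in $[E_i,E_i+\eta)$ and that $\xi_M$ vanishes there. Second, ``dominated convergence on the right'' is not available from $L^1$ or vague convergence of $\xi_n$ alone: the kernel $(y-x)^{-2}$ is non-integrable where $x$ and $y$ meet at $\partial I$, so you need the $\xi_n$ to vanish on a \emph{fixed} neighbourhood of $\partial I$ \emph{uniformly in $n$} --- again exactly the content of Lemma~\ref{lemSSF}; local constancy of the limiting $\xi$ near $\partial I$ does not transfer to the approximants for free. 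Third, trace-norm convergence of $\id_I(A_n)\id_{I^c}(B_n)\id_I(A_n)$ (needed for continuity of the determinant) does not follow from weak convergence of the measures, since the spectral projections only converge strongly; the paper upgrades this via the Bochner-integral representation of Lemma~\ref{lem:intrep}, which again requires a uniform positive distance between the relevant spectral sets of the approximants. Finally, the theorem allows $A$ unbounded (bounded below), which your argument never addresses; the paper reduces to the bounded case by passing to resolvents. None of this defeats your strategy --- it is the strategy the paper implements --- but the approximation step is the delicate half of the proof, not the routine one.
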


\begin{remarks}
\item
The spectral shift function $\xi$ is initially only defined for Lebesgue almost all $E\in \R$, but $\xi$ makes sense for all $E\in\partial I\subset \rho(A)\cap \rho(B)$, see identity \eqref{SSF:resolvent}.
\item
Both quantities in \eqref{thm1:eq2a} are well-defined under the assumptions of the theorem: Loosely speaking, the trace-class property follows from $\partial I$ being in a spectral gap of both operators $A$ and $B$ and $B-A\in\S^1$. The convergence of the integral follows from \eqref{thm:assump}. 
\item
One word about the notation. If we consider
 $\id_{I}(A) \id_{I}(B)\id_{I}(A): \mathcal H' \to \mathcal H'$ for $\mathcal H':= \ran(\id_I(A))$, the latter determinant equals 
\beq
\det \big(\id - \id_{I}(A) \id_{I^c}(B)\id_{I}(A)\big) = 
\det \big(\id_{I}(A) \id_{I}(B)\id_{I}(A)\big|_{\mathcal H'}\big).
\eeq
Thus, one can formulate the result as well in the notation of restricted products of spectral projections. 
Such determinants are sometimes called section determinants \cite{MR1694668, MR2121573}. 
\end{remarks}

\begin{corollary}\label{corollary}
Under the assumptions of Theorem \ref{thm:main} we obtain $\id_{I^c}(A) \id_{I}(B) \id_{I^c}(A)\in \S^1$ and the identity
\beq\label{thm1:eq2b}
-\ln \left(\det \big(\id - \id_{I^c}(A) \id_{I}(B) \id_{I^c}(A)\big) \right)
=
\int_I \d x \int_{I^c} \d y \frac{\xi(x)\xi(y)}{(y-x)^2}
\eeq
holds.
\end{corollary}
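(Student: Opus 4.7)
The plan is to deduce Corollary~\ref{corollary} from Theorem~\ref{thm:main} by applying the theorem with the roles of $A$ and $B$ interchanged and then relating the resulting determinant to the one in \eqref{thm1:eq2b} via an elementary identity for pairs of projections.

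First, I would check that the hypotheses of Theorem~\ref{thm:main} remain valid for the pair $(B,A)$ in place of $(A,B)$. The perturbation $A-B=-V$ is again rank one, and $B=A+V$ is bounded from below since $V$ is a bounded self-adjoint operator. The resolvent condition $\partial I\subset \rho(A)\cap\rho(B)$ is symmetric in $A$ and $B$, and by the defining formula \eqref{SSF:resolvent} the spectral shift function of the pair $(B,A)$ equals $-\xi$, which also vanishes on $\partial I$. Applying Theorem~\ref{thm:main} to $(B,A)$ therefore gives $\id_I(B)\id_{I^c}(A)\id_I(B)\in\S^1$ together with
\begin{equation*}
-\ln\det\big(\id-\id_I(B)\id_{I^c}(A)\id_I(B)\big)
=\int_I\d x\int_{I^c}\d y\,\frac{(-\xi(x))(-\xi(y))}{(y-x)^2}
=\int_I\d x\int_{I^c}\d y\,\frac{\xi(x)\xi(y)}{(y-x)^2},
\end{equation*}
since the two sign flips in the integrand cancel.

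It then remains to show that this determinant coincides with the one in \eqref{thm1:eq2b}. Setting $P:=\id_I(B)$, $R:=\id_{I^c}(A)$, and $K:=RP$, the identities $K^*K=PR^2P=PRP$ and $KK^*=RP^2R=RPR$ follow at once from $P$ and $R$ being orthogonal projections. The operators $K^*K$ and $KK^*$ share the same non-zero spectrum counted with multiplicity, so the trace-class property $PRP\in\S^1$ passes to $RPR\in\S^1$, and
\begin{equation*}
\det\big(\id-\id_I(B)\id_{I^c}(A)\id_I(B)\big)=\det(\id-K^*K)=\det(\id-KK^*)=\det\big(\id-\id_{I^c}(A)\id_I(B)\id_{I^c}(A)\big).
\end{equation*}
Combined with the previous display this is exactly \eqref{thm1:eq2b}. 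I do not anticipate any genuine obstacle; the essential observation is that the integrand $\xi(x)\xi(y)/(y-x)^2$ is invariant under $\xi\mapsto-\xi$, so that interchanging $A$ and $B$ preserves the right-hand side while swapping the determinant into the one asked for up to the purely algebraic identity $\det(\id-K^*K)=\det(\id-KK^*)$.
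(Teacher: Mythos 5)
The second half of your argument --- the identity $\det(\id-K^*K)=\det(\id-KK^*)$ for $K=\id_{I^c}(A)\id_I(B)$ --- is correct, and is in fact the same device the paper uses inside Lemma \ref{Fredholm:Repr} (equations \eqref{Fredholm:eq4}--\eqref{Fredholm:eq5}). The gap is in the first half: Theorem \ref{thm:main} is \emph{not} applicable to the pair $(B,A)$. The standing hypothesis of the paper is $B=A+V$ with $V=|\phi\>\<\phi|$, a \emph{non-negative} rank-one perturbation, whereas the reversed pair has perturbation $A-B=-|\phi\>\<\phi|$, which is not of the form $|\phi'\>\<\phi'|$ for any $\phi'$. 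Your claim that ``the hypotheses of Theorem \ref{thm:main} remain valid for the pair $(B,A)$'' is therefore false, and the restriction is not cosmetic: the proof of the theorem uses the sign of $V$ throughout --- the strict interlacing $a_1<b_1<a_2<\cdots$ of \eqref{ineq:eigenvalues}, the representation $\xi=\sum_n 1_{(a_n,b_n]}$ with $0\le\xi\le1$, and Lemma \ref{lemSSF}, which establishes the one-sided inclusion $[E_i,E_i+\eta)\subset\rho(B_M)$; all of these reverse orientation when the perturbation is non-positive. So the displayed formula for $\det\big(\id-\id_I(B)\id_{I^c}(A)\id_I(B)\big)$ is exactly what needs proof, not something you may quote.

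Worse, the reduction is circular: by your own algebraic identity, the statement ``Theorem \ref{thm:main} for the pair $(B,A)$'' is \emph{equivalent} to Corollary \ref{corollary} for the pair $(A,B)$, so invoking the former to prove the latter makes no progress unless you independently establish the theorem for negative rank-one perturbations (e.g.\ by redoing the Cauchy-determinant computation with the reversed interlacing, or by a reflection $A\mapsto-A$, $I\mapsto-I$, neither of which you carry out). The paper takes a different route that avoids this entirely: in the matrix case Lemma \ref{Fredholm:Repr} shows that \emph{both} determinants equal $\big|\det\big(\<\varphi_j,\psi_k\>\big)_{j\in J_I(A),\,k\in J_I(B)}\big|^2$ for the \emph{square} overlap matrix --- squareness being precisely where the hypothesis $\xi=0$ on $\partial I$ enters --- so the corollary's determinant coincides with the one already computed in Lemma \ref{main:lem}; the general case then follows from the approximation results \eqref{lem1:eq3} and \eqref{lem1:eq4} of Proposition \ref{prop:conv1} together with Proposition \ref{prop:conv2}.
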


A simple calculation shows that differences of spectral projections can be expressed as products of spectral projections and the following holds:

\begin{corollary}\label{corollary2}
Under the assumptions of Theorem \ref{thm:main} we obtain $\big(\id_I(A) -\id_I(B)\big)^2\in\S^1$ and the identity
\beq\label{thm1:eq3b}
-\ln \big( \det\big(\id - \big(\id_I(A) -\id_I(B)\big)^2\big) \big)
=
2 \int_I \d x \int_{I^c} \d y\, \frac{\xi(x)\xi(y)}{(y-x)^2}
\eeq
holds.  
\end{corollary}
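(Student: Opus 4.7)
The plan is to reduce the statement to a combination of Theorem \ref{thm:main} and Corollary \ref{corollary} via the algebraic identity alluded to in the sentence preceding the corollary. First I would set $P:=\id_I(A)$ and $Q:=\id_I(B)$ and compute, using $P^2=P$, $Q^2=Q$, that
\beq\label{plan:square}
(P-Q)^2 = P+Q-PQ-QP = P(\id-Q)P + (\id-P)Q(\id-P).
\eeq
In the notation of the two earlier results this reads
\[
\bigl(\id_I(A)-\id_I(B)\bigr)^2 = \id_I(A)\,\id_{I^c}(B)\,\id_I(A) + \id_{I^c}(A)\,\id_I(B)\,\id_{I^c}(A).
\]
The trace-class property $(\id_I(A)-\id_I(B))^2\in\SO^1$ then follows immediately, since each of the two summands on the right-hand side is trace-class by Theorem \ref{thm:main} and Corollary \ref{corollary} respectively.

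Next I would set $K_1 := P(\id-Q)P$ and $K_2:=(\id-P)Q(\id-P)$ and exploit the orthogonality $P(\id-P)=0$, which gives $K_1K_2=K_2K_1=0$. Consequently
\beq\label{plan:factor}
(\id-K_1)(\id-K_2) = \id - K_1 - K_2 = \id - (P-Q)^2.
\eeq
By multiplicativity of the Fredholm determinant on $\id+\SO^1$,
\[
\det\bigl(\id - (P-Q)^2\bigr) = \det(\id-K_1)\,\det(\id-K_2).
\]

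Finally, I would take the negative logarithm and apply Theorem \ref{thm:main} to $\det(\id-K_1)$ and Corollary \ref{corollary} to $\det(\id-K_2)$. Each contributes the same integral $\int_I\!\d x\int_{I^c}\!\d y\,\xi(x)\xi(y)/(y-x)^2$, and adding the two produces the factor $2$ in \eqref{thm1:eq3b}. There is no serious obstacle here; the only slightly non-routine step is verifying the algebraic decomposition \eqref{plan:square} and the orthogonality $K_1K_2=0$ used to pass from \eqref{plan:factor} to multiplicativity, both of which are straightforward consequences of $P$ and $Q$ being projections.
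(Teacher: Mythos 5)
Your proposal is correct and follows essentially the same route as the paper: the identical decomposition $(\id_I(A)-\id_I(B))^2=\id_I(A)\id_{I^c}(B)\id_I(A)+\id_{I^c}(A)\id_I(B)\id_{I^c}(A)$ (which the paper derives by writing $\id_I(A)-\id_I(B)=\id_I(A)\id_{I^c}(B)-\id_{I^c}(A)\id_I(B)$ and multiplying by the adjoint), followed by multiplicativity of the Fredholm determinant and an appeal to Theorem~\ref{thm:main} and Corollary~\ref{corollary}. Your explicit verification that $K_1K_2=K_2K_1=0$ only spells out the step the paper leaves implicit in invoking multiplicativity.
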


The proof is short and follows directly from the previous results. Therefore, we do not postpone it for later and present it straight away: 

\begin{proof}[Proof of Corollary \ref{corollary2}]
A straight forward calculation shows that the identity
\beq
\id_I(A) -\id_I(B) =  \id_I(A)\id_{I^c}(B) - \id_{I^c}(A)\id_I(B)
\eeq
holds and therefore multiplying the latter with its adjoint we obtain
\begin{align}\label{proj-identity}
\big(\id_I(A) -\id_I(B) \big)^2
&= \big(\id_I(A) -\id_I(B) \big) \big(\id_I(A) -\id_I(B) \big)^*  \notag\\
&= \id_I(A) \id_{I^c}(B) \id_I(A) + \id_{I^c}(A) \id_I(B) \id_{I^c}(A).
\end{align}
Theorem \ref{thm:main}, Corollary \ref{corollary} and the multiplicativity of Fredholm determinants give the result.
\end{proof}

\begin{remark}
In particular, formula \eqref{thm1:eq3b} holds for Fermi projections, i.e. choosing $I=(-\infty,E]$. In this situation, the above determinant \eqref{thm1:eq3b} is related to the scalar product of the ground states of two non-interacting fermionic systems at Fermi energy $E$, see \cite{artAOC2015GKMO} and references cited therein. The above formula can be used to compute this scalar product exactly in cases where the Fermi energy lies in a spectral gap. 
\end{remark}

Considering the results of Theorem \ref{thm:main}, it is natural to ask if condition \eqref{thm:assump} is necessary for formula \eqref{thm1:eq2a} or \eqref{thm1:eq3b} to hold. We will discuss this in the following:

\begin{proposition}\label{prop1}
Assume that $I$ is given by \eqref{defI} and satisfies \eqref{assum:specgap}. Then, $\id_I(A) -\id_I(B)\in \S^1$. If $\tr\big(\id_I(A) -\id_I(B)\big) > 0$
the identity
\beq\label{prop1:eq1}
\det \big(\id - \id_{I}(A) \id_{I^c}(B)\id_{I}(A)\big)= 0 
\eeq
holds, whereas if $tr\big(\id_I(A) -\id_I(B)\big)<0$ then
\beq\label{prop1:eq2} 
\det \big(\id - \id_{I^c}(A) \id_{I}(B) \id_{I^c}(A)\big)  = 0.
\eeq
In particular, $\tr\big(\id_I(A) -\id_I(B)\big) \neq 0$ implies $ \det\big(\id - \big(\id_I(A) -\id_I(B)\big)^2\big)=0$. 
\end{proposition}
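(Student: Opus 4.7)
The plan is to reduce Proposition~\ref{prop1} to the classical identity for pairs of orthogonal projections with trace-class difference, which relates the trace of that difference to dimensions of mutual range–kernel intersections, and then to apply multiplicativity of Fredholm determinants exactly as in the proof of Corollary~\ref{corollary2}.

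The trace-class statement $\id_I(A)-\id_I(B)\in\S^1$ follows from $B-A\in\S^1$ and the gap condition $\partial I\subset\rho(A)\cap\rho(B)$ by the same contour/functional-calculus argument that underlies the trace-class property stated in Theorem~\ref{thm:main}: one replaces $\id_I$ by a smooth compactly supported $f$ which agrees with $\id_I$ on $\sigma(A)\cup\sigma(B)$, and then $f(A)-f(B)\in\S^1$ by standard Birman-Solomyak type estimates.

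The key step is the two-projection identity: for orthogonal projections $P$ and $Q$ on $\H$ with $P-Q\in\S^1$,
\beq
\tr(P-Q)=\dim\bigl(\ran P\cap\ker Q\bigr)-\dim\bigl(\ran Q\cap\ker P\bigr),
\eeq
which is a classical consequence of Kato's index theory for a pair of projections --- equivalently of the Fredholm index of the compression $QP\colon\ran P\to\ran Q$, whose kernel and cokernel are precisely the two intersections on the right-hand side. Setting $P=\id_I(A)$ and $Q=\id_I(B)$: if $\tr(P-Q)>0$ there exists a nonzero $\psi\in\ran\id_I(A)\cap\ran\id_{I^c}(B)$, and any such $\psi$ satisfies $\id_I(A)\id_{I^c}(B)\id_I(A)\psi=\psi$. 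Hence $1$ is an eigenvalue of the trace-class operator $\id_I(A)\id_{I^c}(B)\id_I(A)$, which yields \eqref{prop1:eq1}. Swapping $P$ and $Q$ (equivalently $I$ and $I^c$) handles $\tr(P-Q)<0$ and gives \eqref{prop1:eq2}.

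For the final assertion I apply identity \eqref{proj-identity} to decompose $(\id_I(A)-\id_I(B))^2=X+Y$ with $X:=\id_I(A)\id_{I^c}(B)\id_I(A)$ and $Y:=\id_{I^c}(A)\id_I(B)\id_{I^c}(A)$. Since $\id_I(A)\id_{I^c}(A)=0$ one has $XY=YX=0$, so multiplicativity of Fredholm determinants gives $\det(\id-X-Y)=\det(\id-X)\det(\id-Y)$, and the previous step shows that at least one factor vanishes whenever $\tr(\id_I(A)-\id_I(B))\neq 0$. The main obstacle is a clean deployment of the two-projection index identity in the present setting, where $P$ and $Q$ need not individually be trace class; beyond that the proof is a short algebraic manipulation.
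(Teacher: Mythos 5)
Your argument is essentially the paper's: it likewise invokes the index identity for a pair of orthogonal projections with trace-class difference (citing Avron--Seiler--Simon in the form $\tr(P-Q)=\dim\ker(P-Q-\id)-\dim\ker(P-Q+\id)$, which is exactly your range–kernel formula), extracts from $\tr(P-Q)>0$ a nonzero vector in $\ran\id_I(A)\cap\ker\id_I(B)$ fixed by $\id_I(A)\id_{I^c}(B)\id_I(A)$, and concludes that $1$ is an eigenvalue so the determinant vanishes, with the final claim following from \eqref{proj-identity} and multiplicativity. The only cosmetic difference is that the paper derives the trace-class property from its Bochner-integral representation (Lemma~\ref{lem:intrep}) rather than from a smoothed functional-calculus argument; both routes are standard.
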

The above is an immediate consequence of the theory of indices of pairs of spectral projections \cite{MR1262254}. The condition $\tr\big(\id_I(A) -\id_I(B)\big)\neq 0$ is opposite to \eqref{thm:assump} in the sense that under assumption \eqref{thm:assump} we obtain $\tr\big(\id_I(A) -\id_I(B)\big)=0$. The condition $\tr\big(\id_I(A) -\id_I(B)\big)=0$ alone should imply an integral formula for our Fredholm determinant. However, such formulas must in general be different from \eqref{thm1:eq2a} as we see in the next theorem: 

\begin{theorem}\label{prop2}
Let $N\in\N$ and 
$I:=\bigcup_{i=1}^N[E_{2i-1},E_{2i}]$
where 
$-\infty\leq E_1\leq E_2<... <E_{2N-1}\leq E_{2N}\leq\infty$.
We assume that the boundary of $I$ satisfies
$\partial I\subset \rho(A)\cap \rho(B)$
and that the spectral shift function satisfies for all $x\in \partial I$
\beq\label{thm:assump2}
\xi(x) = 1.
\eeq
Then, $ \id_{I}(A) \id_{I^c}(B)\id_{I}(A)\in\S^1$ and the identity
\beq\label{thm1:eq222}
-\ln \left(\det \big(\id - \id_{I}(A) \id_{I^c}(B)\id_{I}(A)\big) \right)
=
\int_I \d x \int_{I^c} \d y\, \frac{\big(\xi(x)- 1\big)\big( \xi(y) - 1\big)}{(y-x)^2}
\eeq
holds.
\end{theorem}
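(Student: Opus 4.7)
My strategy is to reduce Theorem \ref{prop2} to Theorem \ref{thm:main} via the substitution $\eta := \xi - 1$. Under hypothesis \eqref{thm:assump2} one has $\eta|_{\partial I} = 0$, so $\eta$ stands in the same relation to $\partial I$ as the SSF $\xi$ does in Theorem \ref{thm:main}. Since the Krein spectral shift function is only determined up to additive integer constants on unbounded components of $\varrho(A) \cap \varrho(B)$, the passage $\xi \mapsto \eta$ can be viewed as a natural alternate normalization of the SSF, and the claim is that this replacement carries over without any other structural change in the identity.

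The trace-class property $\id_{I}(A) \id_{I^c}(B) \id_{I}(A) \in \S^1$ is obtained exactly as in Theorem \ref{thm:main}: the gap hypothesis \eqref{assum:specgap} together with $B - A \in \S^1$ imply that $\id_I(A) - \id_I(B) \in \S^1$, and the decomposition in \eqref{proj-identity} then transfers the trace-class membership to the product. This part is independent of whether $\xi$ equals $0$ or $1$ on $\partial I$.

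For the integral identity, the plan is to revisit the proof of Theorem \ref{thm:main} and locate the step in which the boundary condition \eqref{thm:assump} is invoked. One expects this to be either an integration by parts or a Krein-type trace formula which produces boundary evaluations of $\xi$ at the points of $\partial I$ multiplied by a finite kernel; these evaluations vanish under \eqref{thm:assump}. Under the new hypothesis \eqref{thm:assump2} one carries out the identical derivation with $\eta$ in place of $\xi$ as the distinguished function: the same boundary step then kills the corresponding contributions (since $\eta|_{\partial I} = 0$), and reassembly produces exactly \eqref{thm1:eq222}.

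The main obstacle is that the naive substitution $\xi = 1 + \eta$ is not permissible directly inside the final double integral, because $\int_I \int_{I^c} dx\,dy/(y-x)^2$ diverges along the diagonal portion of $\partial I \times \partial I$, so neither the constant term nor the cross term $\int \eta/(y-x)^2$ is separately well defined. The substitution must therefore be installed at an earlier stage of the argument --- where intermediate expressions involve traces of resolvents of $A$ and $B$, or contour integrals in which $\xi$ appears only in combinations that remain finite --- and the boundary cancellations that produce the desired integrand must be performed there. This is the technical heart of the adaptation; once it is carried out, the rest of the proof is a formal mirror of the proof of Theorem \ref{thm:main}.
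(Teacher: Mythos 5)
Your proposal correctly identifies that the naive substitution $\xi = 1+\eta$ cannot be performed inside the final double integral, and the trace-class part is fine, but the plan for the integral identity rests on a mischaracterization of how the boundary condition enters the proof of Theorem~\ref{thm:main}, and the step you yourself call ``the technical heart of the adaptation'' is left entirely undone. The paper's proof of Theorem~\ref{thm:main} contains no integration by parts and no Krein-type trace formula producing boundary evaluations of $\xi$; it is an explicit finite-dimensional computation (Cauchy determinant, Lemmas~\ref{Fredholm:Repr} and~\ref{prod:thm1}) followed by a finite-rank approximation. The hypothesis $\xi|_{\partial I}=0$ is used \emph{combinatorially}: it forces the index sets $J_1=\{k: b_k\in I\}$ and $J_2=\{j: a_j\in I\}$ of interlacing eigenvalues to coincide, which is exactly what makes the product representation of the determinant match $\int_I\int_{I^c}\xi(x)\xi(y)(y-x)^{-2}$. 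There is therefore no single ``boundary step'' at which one can swap $\xi$ for $\eta$ and let the same cancellation go through.

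What actually has to be done under $\xi|_{\partial I}=1$ (and what Lemma~\ref{main:lem2} does) is a genuinely different bookkeeping: the index sets now satisfy $J_1+1=J_2$ (a cyclic shift, cf.\ \eqref{proof:main:eq10}), so the Cauchy-determinant product must be re-indexed over shifted pairs $(a_{j+1},b_j)$; on the other side, $\xi-1=-\sum_n 1_{(b_n,a_{n+1}]}$ is supported on the \emph{complementary} gaps between interlacing eigenvalues, and one checks directly that the resulting double integral reproduces the shifted product. One must also adapt Lemma~\ref{lemSSF} so that the finite-rank approximants preserve $\xi_M=1$ near $\partial I$. None of this is a formal mirror of the $\xi|_{\partial I}=0$ case obtained by renaming $\xi$ as $\eta$; your reduction as stated does not produce a proof, and the observation that $\xi-1$ is ``an alternate normalization of the SSF'' does not help, since $1-\xi$ is not the spectral shift function of any trace-class pair in the normalization the theorem uses (it does not vanish at infinity).
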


We end this section with several general remarks concerning extensions of the above integral formulas to higher-rank perturbations.  

\begin{remarks}\label{remark}
\item
Since the spectral shift function makes sense for general trace-class perturbations, one can define the above integrals as well for such perturbations. However, we emphasise that Theorem \ref{thm:main} and its descendants are only valid for rank-one perturbations in the particular form stated. This can already be seen in the matrix case. Let
\beq
A:=
\begin{pmatrix}
0 & 0 \\ 0 & 3
\end{pmatrix}
\quad\text{and}\quad
B:=
\begin{pmatrix}
0 & 0 \\ 0 & 3
\end{pmatrix}
+
\begin{pmatrix}
1 & 0 \\ 0 & 1
\end{pmatrix}.
\eeq
Then, $A$ and $B$ differ by a rank-two perturbation. For $I:=[-1,2]$ the difference satisfies $\id_I(A) -\id_I(B)= 0$ and accordingly
$\det\big(\id - \big(\id_I(A) -\id_I(B)\big)^2\big) = 1$. 
Hence, 
\beq
-\ln \big(\det\big(\id - \big(\id_I(A) -\id_I(B)\big)^2\big) \big) =0.
\eeq 
On the other hand, the corresponding integral does not vanish, i.e.
\beq
 \int_I \d x \int_{I^c} \d y\, \frac{\xi(x) \xi(y)}{(y-x)^2} =  \int_0^1 \d x \int_{3}^4 \d y\, \frac{1}{(y-x)^2}> 0.
\eeq
However, it is an interesting question if a similar identity to  \eqref{thm1:eq3b} holds beyond rank one perturbations.
\item 
On a more abstract level one reason for the Fredholm determinant \eqref{thm1:eq2a} to be actually computable is that $\id_{I}(A)\id_{I^c}(B)\id_{I}(A)$ is an integrable operator in the sense of \cite{MR1469319}. Let us illustrate this very briefly. For this, we assume that the spectral measures
\beq
\mu_A(\,\cdot\,) :=\<\phi, \id_{(\cdot)}(A) \phi\>\quad\text{and}\quad \mu_B(\,\cdot\,):=\<\phi, \id_{(\cdot)}(B) \phi\>
\eeq
are absolutely continuous with densities $\frac{\d }{\d x}\mu_A= f$ and $\frac{\d }{\d x}\mu_B= g$.
Moreover, we assume that $\phi$ is cyclic with respect to $A$. Hence, the mappings
\beq
\begin{aligned}
&U^*:\H\to L^2(\sigma(A)),\quad h(A)\phi\mapsto h f^{1/2}\\
&V^*:\H\to L^2(\sigma(B)),\quad h(B)\phi\mapsto h g^{1/2}
\end{aligned}
\eeq
are well-defined unitaries, where $L^2(I)$ denotes the space of square integrable functions on $I\in\Borel(\R)$ w.r.t. Lebesgue measure.
Then we obtain 
\beq
U^*\id_{I}(A)\id_{I^c}(B)V = K,
\eeq
where $K:L^2(I^c\cap \sigma(B))\to L^2(I\cap \sigma(A))$ is an integral operator with kernel 
\beq
K(x,y) = \frac{f^{1/2}(x)g^{1/2}(y)}{x-y}\qquad \text{for}\ x\in I,\ y\in I^c,
\eeq
see \cite[Thm. 2.1]{MR2540995}.
Using this we obtain that $\id_{I}(A)\id_{I^c}(B)\id_{I}(A)$ is unitarily equivalent to the integral operator  $R:=K K^*: L^2(I\cap \sigma(A))\to L^2(I\cap \sigma(A))$ with kernel 
\beq\label{riemann-hilbert}
R(x,y):= \frac{f^{1/2}(x)G(y) - f^{1/2}(y)G(x)}{x-y} \quad\text{for}\quad x,y\in I\cap \sigma(A)
\eeq
where $G(x):= \int_{I^c} \d z\, \frac{g(z)}{x-z} f^{1/2}(x)$. 
This operator is integrable because $f^{1/2}(x)G(x)-f^{1/2}(x)G(x)=0$. Such operators have the property that one can invert $\id - R$ on an abstract level solving a matrix-valued Riemann-Hilbert problem, see \cite{MR1469319}. 
For rank-k perturbations, the product $\id_{I}(A)\id_{I^c}(B)\id_{I}(A)$ can, under some assumptions, still be identified by  a more complicated integrable operator. Hence, we conjecture in the case of a trace-class perturbation a formula of the form 
\beq\label{intfomula:general}
-\ln \left(\det \big(\id - \id_{I}(A) \id_{I^c}(B)\id_{I}(A)\big) \right)
=
2 \int_I \d x \int_{I^c} \d y\, \frac{H(x,y)}{(y-x)^2}
\eeq
for some function $H:\R^2\to \C$.

Even though our operator admits this integrable structure, we are not using it directly in the proof and we are not solving any matrix-valued Riemann Hilbert problem. We rather prove the theorem in a more elementary way, first for operators with discrete spectrum using the Cauchy determinant formula and then taking the limit in a suitable way. 
\end{remarks}

\subsection{An Application: The subspace perturbation problem}\label{sec:subspace}

Let $A$ be a self-adjoint operator such that its spectrum consists of two sets $\Sigma$ and $\sigma(A)\backslash\Sigma$ which are separated by a distance $\delta>0$. Now, we perturb $A$ by a self-adjoint perturbation $V$ of norm $\|V\|<\delta/2$. In this way $V$ does not close the gaps in the spectrum. We introduce the enlarged set
\beq\label{subspace:thm:def}
\wtilde\Sigma_{\delta/2} := \{x\in \R:\, \dist(x,\Sigma)<\delta/2\}.
\eeq
Then \cite{MR1991758} posed the question if this is sufficient to imply $\| \id_\Sigma(A) - \id_{\wtilde\Sigma_{\delta/2}} (B) \| < 1$ and proved that this is indeed the case for $\Sigma$ being a convex set in $\sigma(A)$. 
Even though there was some progress for general sets $\Sigma$,  it  seems to be an open problem to prove this for general sets. For the moment, one has to assume that $\|V\|< c\,\delta$ for some explicit $c<0.46$, see \cite{MR3079864,MR3231246,MR3420326} and references cited therein. Theorem~\ref{thm:main} provides a tool to prove this for arbitrary sets $\Sigma$ in the very special case of $V$ being a rank-one perturbation. 
In the case of a non-negative perturbation the spectrum is moved in a definite direction by the perturbation. Hence, in our case it is more convenient to work with the set
\beq\label{subspace:thm:def2}
\Sigma_{\delta} := \Sigma + [0,\delta]:= \{x+\wtilde \delta:\, x\in \Sigma,\, \wtilde \delta\in [0,\delta] \}.
\eeq

\begin{corollary}\label{subspace:thm}
Let $A, B$ be self-adjoint and bounded with $B-A=|\phi\>\<\phi|:=V$ for some $\phi\in\H$ and 
$\Sigma\subset \sigma(A)$
such that 
\beq\label{subspace:thm:eq}
\dist(\Sigma, \sigma(A)\backslash \Sigma)= \delta>0
\eeq
and $\|V\|=\|\phi\|^2<\delta$. Let $\Sigma_{\delta}$ be defined by \eqref{subspace:thm:def2} then 
\beq\label{subspace:bound}
\| \id_\Sigma(A) - \id_{\Sigma_{\delta}} (B) \| < 1. 
\eeq
\end{corollary}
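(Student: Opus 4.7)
My plan is to invoke Corollary~\ref{corollary2} for a carefully tailored finite union of closed intervals $I$ satisfying $\id_I(A) = \id_\Sigma(A)$ and $\id_I(B) = \id_{\Sigma_\delta}(B)$. Finiteness of the right-hand side of \eqref{thm1:eq3b} will then force the Fredholm determinant to be strictly positive, and since $(\id_\Sigma(A) - \id_{\Sigma_\delta}(B))^2$ is a non-negative self-adjoint trace-class operator, this rules out $1$ from its spectrum and yields $\|\id_\Sigma(A) - \id_{\Sigma_\delta}(B)\| < 1$.

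For the construction, fix $\epsilon \in \bigl(0, (\delta - \|V\|)/2\bigr)$ and set
$$I := \Sigma + [-\epsilon,\,\|V\| + \epsilon].$$
Since $\Sigma$ is compact (being clopen in the compact set $\sigma(A)$), only finitely many gaps of $\Sigma$ can exceed $\|V\| + 2\epsilon$ in length, so $I$ is a finite union of closed intervals of the form \eqref{defI}. I would then verify the hypotheses of Theorem~\ref{thm:main}, crucially using $\|V\|<\delta$ and the sharp rank-one shift bound that every $b\in\sigma(B)$ lies in some $[a,a+\|V\|]$ with $a\in\sigma(A)$. First, $I \cap \sigma(A) = \Sigma$ because $\sigma(A)\setminus\Sigma$ stays at distance $\ge\delta > \|V\|+\epsilon$ from $\Sigma$; hence $\id_I(A) = \id_\Sigma(A)$. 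Second, an $a\in\sigma(A)\setminus\Sigma$ produces a $b\in[a,a+\|V\|]$ still at distance $>\epsilon$ from $\Sigma$, so outside $I$, whereas an $a\in\Sigma$ gives $b\in\Sigma+[0,\|V\|]\subset I\cap\Sigma_\delta$; thus $I\cap\sigma(B)=\sigma(B)\cap(\Sigma+[0,\|V\|])=\sigma(B)\cap\Sigma_\delta$, giving $\id_I(B) = \id_{\Sigma_\delta}(B)$. Third, the same estimates show $\partial I \subset \rho(A)\cap\rho(B)$. Fourth, by \eqref{SSF:resolvent} the spectral shift function is locally constant with values in $\{0,1\}$ on $\rho(A)\cap\rho(B)$; a direct case analysis using $b-a\le\|V\|<\delta-\epsilon$ shows that at each $x_0\in\partial I$, every pair $(a,b)$ of corresponding $A$- and $B$-spectrum lies either entirely below or entirely above $x_0$, so the contributions to $\tr(\id_{(-\infty,x_0)}(A)-\id_{(-\infty,x_0)}(B))$ cancel pair by pair and $\xi(x_0)=0$, verifying \eqref{thm:assump}.

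Applying Corollary~\ref{corollary2} now gives
$$-\ln\det\bigl(\id - (\id_\Sigma(A) - \id_{\Sigma_\delta}(B))^2\bigr) = 2\int_I \d x \int_{I^c} \d y\,\frac{\xi(x)\xi(y)}{(y-x)^2}.$$
The integrand is supported in the bounded set $\bigl(\sigma(A)\cup\sigma(B)\bigr)^2$ and vanishes on a two-dimensional open neighborhood of the only potentially singular locus $\{(x,x):x\in\partial I\}$, because $\xi$ is identically zero in two-sided open neighborhoods of each point of $\partial I$. The right-hand side is therefore finite, so the Fredholm determinant is strictly positive, and $1$ is excluded from the spectrum of the non-negative self-adjoint trace-class operator $(\id_\Sigma(A)-\id_{\Sigma_\delta}(B))^2$. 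As its eigenvalues lie in $[0,1]$ and can accumulate only at $0$, the top eigenvalue is attained and strictly less than $1$, giving $\|\id_\Sigma(A)-\id_{\Sigma_\delta}(B)\|^2 < 1$, which is \eqref{subspace:bound}.

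The main obstacle is the combinatorial bookkeeping in the construction of $I$: the strict inequality $\|V\|<\delta$, together with the sharp rank-one shift bound $0\le b-a\le\|V\|$, must be exploited at each step to prevent any $B$-spectrum from leaking across $\partial I$ and to keep $\xi$ in the $0$-regime rather than the $1$-regime of Theorem~\ref{prop2}. A weaker separation would either spoil the identification $\id_I(B)=\id_{\Sigma_\delta}(B)$ or force use of \eqref{thm1:eq222}, whose integrand $(\xi(x)-1)(\xi(y)-1)/(y-x)^2$ need not produce a finite right-hand side in this setting.
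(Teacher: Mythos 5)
Your argument follows the paper's proof in all essentials: both reduce \eqref{subspace:bound} to strict positivity of $\det\bigl(\id-(\id_\Sigma(A)-\id_{\Sigma_\delta}(B))^2\bigr)$ for the trace-class non-negative operator $(\id_\Sigma(A)-\id_{\Sigma_\delta}(B))^2$, and obtain that positivity by applying Corollary~\ref{corollary2} to an enlargement of $\Sigma$ on whose boundary $\xi$ vanishes, finiteness of the resulting double integral being guaranteed because $\xi\equiv 0$ on a neighbourhood of that boundary. The only (harmless) differences are your choice $I=\Sigma+[-\epsilon,\|V\|+\epsilon]$ in place of the paper's $I=\Sigma_\delta$ --- which if anything treats the hypothesis $\partial I\subset\rho(A)\cap\rho(B)$ more carefully, since the left endpoints of $\Sigma_\delta$ lie in $\sigma(A)$ --- and your direct eigenvalue-shift bookkeeping for $\xi|_{\partial I}=0$ where the paper invokes the Birman--Solomyak formula together with local constancy of $\xi$ on $\rho(A)\cap\rho(B)\cap\R$.
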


It is of certain interest to give a quantitative bound in terms of the distance $\delta$ and the operator norm of $V$. The above result is rather indirect and doesn't provide this. With the method used in the above proof one cannot expect to obtain an optimal quantitative bound because we take all eigenvalues of $\id_\Sigma(A) - \id_{\Sigma_{\delta}} (B)$ into account instead of only the relevant one with the biggest modulus. But using different methods, one can prove an optimal bound  in the case of a rank-one perturbation:

\begin{theorem}\label{subspace:thm2}
Let $A, B$ be self-adjoint and bounded with $B-A=|\phi\>\<\phi|:=V$ for some $\phi\in\H$ and 
$\Sigma\subset \sigma(A)$
such that 
\beq\label{subspace:thm:eq2}
\dist(\Sigma, \sigma(A)\backslash \Sigma)= \delta>0
\eeq
and $\|V\|=\|\phi\|^2<\delta$. Let $\Sigma_{\delta}$ be defined by \eqref{subspace:thm:def2} then 
\beq
\| \id_\Sigma(A) - \id_{\Sigma_{\delta}} (B) \| < \| V\|/ \delta. 
\eeq
\end{theorem}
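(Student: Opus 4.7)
The plan is to sidestep the Fredholm determinant approach of Theorem~\ref{thm:main} and work directly with resolvents via the Krein formula for rank-one perturbations. Write $R_A(z):=(z-A)^{-1}$, $R_B(z):=(z-B)^{-1}$, and $m(z):=\<\phi,R_A(z)\phi\>$. Since $V=|\phi\>\<\phi|\ge 0$, rank-one perturbation theory (Aronszajn--Donoghue interlacing) ensures that every eigenvalue of $B$ is an upward shift of an eigenvalue of $A$ by at most $\|V\|<\delta$. Consequently the part of $\sigma(B)$ originating from $\Sigma$ lies inside $\Sigma+[0,\|V\|]\subset\Sigma_\delta$, while the $B$-spectrum originating from $\sigma(A)\setminus\Sigma$ remains separated from $\Sigma_\delta$ by at least $\delta-\|V\|>0$. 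A single closed contour $\Gamma\subset\C$ enclosing $\Sigma+[0,\|V\|]$ and avoiding the shifted complement therefore represents both projections:
\beq
P_A=\frac{1}{2\pi\i}\oint_\Gamma R_A(z)\,\upd z,\qquad P_B=\frac{1}{2\pi\i}\oint_\Gamma R_B(z)\,\upd z.
\eeq

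Substituting the Krein identity $R_B(z)-R_A(z)=\frac{|R_A(z)\phi\>\<R_A(\bar z)\phi|}{1+m(z)}$ into the difference yields
\beq
P_A-P_B=-\frac{1}{2\pi\i}\oint_\Gamma\frac{|R_A(z)\phi\>\<R_A(\bar z)\phi|}{1+m(z)}\,\upd z,
\eeq
a contour integral of rank-one operators. On the non-cyclic subspace $\H_\phi^\perp$ of $\phi$ with respect to $A$, the operators $A$ and $B$ coincide and the hypothesis $\|V\|<\delta$ keeps $\partial\Sigma_\delta$ in the resolvent set of $A|_{\H_\phi^\perp}$, so the difference vanishes there. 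On the cyclic subspace, pass to the spectral model $A\sim$ multiplication by $x$ on $L^2(\upd\mu_A)$ with $\mu_A=\<\phi,\id_\cdot(A)\phi\>$ of total mass $\|V\|$. In this representation the integrand becomes explicit; deforming $\Gamma$ to the upper half-plane at height $\eta\downarrow0$ and combining the Herglotz lower bound on $|1+m(z)|$ with the gap estimate $\|R_A(z)\phi\|\le\|\phi\|/\dist(z,\sigma(A))$ on the critical portion of $\Gamma$ yields the estimate $\|P_A-P_B\|<\|V\|/\delta$.

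The main obstacle is producing the sharp prefactor $1/\delta$ rather than $1/(\delta-\|V\|)$: a naive Davis--Kahan $\sin\Theta$ argument based on the $B$-spectral gap around $\Sigma_\delta$ delivers only $\|V\|/(\delta-\|V\|)$. The improvement hinges on positivity $V\ge0$, which anchors the lower edge of $\Sigma_\delta$ at the unshifted $A$-spectrum, together with an exact cancellation provided by the scalar denominator $1+m(z)$ in Krein's formula, which restores the gap to its original size $\delta$. Such cancellations are unavailable for signed or higher-rank perturbations, matching the author's own observation in Remarks~\ref{remark} that sharp identities of this type are specific to the positive rank-one setting.
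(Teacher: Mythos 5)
Your proposal identifies the right structural ingredients (positivity of $V$, the Krein formula, the rank-one spectral model), but it does not actually prove the theorem: the entire quantitative content is compressed into the sentence ``deforming $\Gamma$ \dots{} yields the estimate $\|P_A-P_B\|<\|V\|/\delta$,'' and you yourself concede immediately afterwards that the natural estimate along these lines produces $\|V\|/(\delta-\|V\|)$, not $\|V\|/\delta$. The appeal to ``an exact cancellation provided by the scalar denominator $1+m(z)$'' is exactly the missing step, and it is never exhibited. As described, the contour argument is also not workable: to get a finite bound you must keep $\Gamma$ at positive distance from $\sigma(A)\cup\sigma(B)$, but then $\|R_A(z)\phi\|\le\|\phi\|/\dist(z,\sigma(A))$ degenerates as the contour approaches the spectrum, while letting $\eta\downarrow 0$ runs the contour through points where $R_A$ has no bounded boundary values (and, in the discrete case, through poles of $R_A$ and zeros of $1+m$). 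The Herglotz lower bound on $|1+m(z)|$ is $|\Im m(z)|$, which vanishes as $\eta\downarrow 0$ off the support of $\mu_A$, so it cannot control the denominator on the critical portion of the contour. In short, the step that distinguishes the sharp constant $1/\delta$ from the Davis--Kahan-type constant $1/(\delta-\|V\|)$ is precisely the step you have not carried out.

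For contrast, the paper obtains the constant without any contour integration. It first reduces via
$\bigparens{\id_I(A)-\id_I(B)}^2=\id_I(A)\id_{I^c}(B)\id_I(A)+\id_{I^c}(A)\id_I(B)\id_{I^c}(A)$
(with $I=\Sigma_\delta$, using $\id_\Sigma(A)=\id_{\Sigma_\delta}(A)$) to bounding $\|\id_I(A)\id_{I^c}(B)\id_I(A)\|$. It then invokes the Liaw--Treil representation: in the spectral models $L^2(\d\mu_B)\to L^2(\d\mu_A)$ built from the scalar spectral measures of $\phi$, the operator $\id_I(A)\id_{I^c}(B)$ acts as the integral operator with kernel exactly $1/(x-y)$. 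Since $\mu_A$ and $\mu_B$ each have total mass $\|\phi\|^2=\|V\|$ and the relevant supports are separated by $\delta$, a single Cauchy--Schwarz gives $\|KK^*\|\le\|V\|^2/\delta^2$, whence the bound after taking a square root. This is where both factors of $\|V\|$ and both factors of $1/\delta$ come from; the denominator $1+m(z)$ in your Krein formula is implicitly what converts one copy of $\d\mu_A$ into $\d\mu_B$, but making that precise is essentially equivalent to redoing the Liaw--Treil computation. If you want to salvage your route, you should carry out the $\eta\downarrow 0$ limit explicitly in the spectral model and show that the resulting bilinear form is exactly the Cauchy kernel paired against $\d\mu_A\otimes\d\mu_B$ --- at which point you have reproduced the paper's argument.
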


Even though the above theorem makes Corollary \ref{subspace:thm} obsolete, the argument in Corollary \ref{subspace:thm} may help to prove the bound \eqref{subspace:bound} in more general cases provided an integral formula exists for more general perturbations, see \eqref{intfomula:general}.  
We emphasise that our proof of the sharp result Theorem \ref{subspace:thm2} fails for perturbations other than rank one. 

\section{Proofs}

\subsection{Proof of Theorem \ref{thm:main} for finite-rank operators}\label{subsec:matrices}

Throughout the proof we assume that $\phi$ is cyclic with respect to $A$. If this is not the case from the beginning we have to restrict ourselves to the cyclic subspace which we omit to keep the notation simple. We first prove the result for matrices. For $M\in\N$, we denote by $\C^{M\times M }$ the set of all $M\times M$ matrices with $\C$-valued entries. 

Let $M\in\N$. From now on we assume in this section that $A,B\in \C^{M\times M}$, are self-adjoint and $B:=A+ |\phi\>\<\phi|$, where $\phi\in \C^M$.

\begin{lemma}\label{main:lem}
Under the assumptions of Theorem \ref{thm:main} on the set $I$, the identity
\begin{align}\label{thm1:eq2aa}
-\ln \left(\det \big(\id - \id_{I}(A) \id_{I^c}(B)\id_{I}(A)\big) \right)
=  \int_I \d x \int_{I^c} \d y\, \frac{\xi(x)\xi(y)}{(y-x)^2}
\end{align}
holds.
\end{lemma}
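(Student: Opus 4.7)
My plan is to reduce the Fredholm determinant to a square Cauchy determinant via a unitarity trick, evaluate it explicitly using the classical Cauchy determinant formula together with residue identities for the eigenvector amplitudes, and verify that the result matches an elementary computation of the right-hand integral.

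Since $\phi$ is cyclic for $A$, both $A$ and $B$ have simple spectra $a_1<\cdots<a_M$ and $b_1<\cdots<b_M$, and the non-negativity of $V$ forces the strict interlacing $a_n<b_n<a_{n+1}$. Consequently $\xi=\sum_n\id_{(a_n,b_n)}$, and the hypotheses $\partial I\subset\varrho(A)\cap\varrho(B)$ and $\xi|_{\partial I}=0$ force each interval $[a_n,b_n]$ to lie entirely in $I$ or entirely in $I^c$. Set $S:=\{n\st a_n\in I\}=\{n\st b_n\in I\}$, so that $\id_I(A)$ and $\id_I(B)$ both have rank $|S|$. Let $U_{kj}:=\<\psi_k^A,\psi_j^B\>$ denote the unitary change-of-basis matrix. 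Expressed in the $A$-eigenbasis on $\ran(\id_I(A))$, the operator $\id_I(A)\id_{I^c}(B)\id_I(A)$ becomes $NN^*$ with $N:=(U_{kj})_{k\in S,\,j\in S^c}$, and the identity $UU^*=\id$ gives
$$\id_{|S|}-NN^*=\tilde N\tilde N^*,\qquad \tilde N:=(U_{kj})_{k,j\in S}.$$
Because $|S|$ rows meet $|S|$ columns, $\tilde N$ is a square $|S|\times|S|$ matrix and $\det(\id-NN^*)=|\det\tilde N|^2$.

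To evaluate $|\det\tilde N|^2$, use that $\phi$ is cyclic to write $\psi_j^B=\alpha_j(b_j-A)^{-1}\phi$, which gives $U_{kj}=\alpha_j u_k/(b_j-a_k)$ with $u_k:=\<\psi_k^A,\phi\>$. Hence $\tilde N$ has Cauchy-kernel form, and the Cauchy determinant formula expresses $|\det\tilde N|^2$ explicitly as a ratio of products over pairs in $S$. The matrix-determinant-lemma identity $\det(B-z)=\det(A-z)\bigparens{1+\<\phi,(A-z)^{-1}\phi\>}$ together with residue calculus at $z=a_k$ and at $z=b_j$ yields
$$|u_k|^2=\frac{\prod_{j=1}^M|b_j-a_k|}{\prod_{k'\neq k}|a_{k'}-a_k|},\qquad |\alpha_j|^2=\frac{\prod_{k=1}^M|a_k-b_j|}{\prod_{j'\neq j}|b_{j'}-b_j|},$$
with positivity guaranteed by interlacing. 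Substituting these in and organising the products according to whether indices lie in $S$ or $S^c$, the contributions internal to $S$ and to the full index set cancel against the Cauchy factor, leaving
$$-\ln\det\bigparens{\id-\id_I(A)\id_{I^c}(B)\id_I(A)}=\sum_{n\in S}\sum_{m\in S^c}\ln\frac{|a_m-a_n|\,|b_m-b_n|}{|a_m-b_n|\,|b_m-a_n|}.$$

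For the integral side, since $\xi$ is supported in $\bigcup_n[a_n,b_n]$ and each $[a_n,b_n]$ lies entirely in $I$ or in $I^c$,
$$\int_I\d x\int_{I^c}\d y\,\frac{\xi(x)\xi(y)}{(y-x)^2}=\sum_{n\in S}\sum_{m\in S^c}\int_{a_n}^{b_n}\d x\int_{a_m}^{b_m}\d y\,\frac{1}{(y-x)^2},$$
and elementary antidifferentiation (using antiderivatives $-\ln|a_m-x|$ and $-\ln|b_m-x|$) produces exactly $\ln\frac{|a_m-a_n||b_m-b_n|}{|a_m-b_n||b_m-a_n|}$ for each pair $(n,m)$, matching the previous formula and proving the lemma. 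The main obstacle I anticipate is the algebraic bookkeeping in simplifying $|\det\tilde N|^2$: three distinct product structures -- the Cauchy factor over $S\times S$ and the two resolvent-type products for $|u_k|^2$ and $|\alpha_j|^2$ indexed over $S\times\{1,\ldots,M\}$ and $\{1,\ldots,M\}\times S$ -- must be disentangled so that only the $S\times S^c$ cross-terms survive. The structural feature that makes this possible is that $\tilde N$ is \emph{square}, which is precisely the content of the hypothesis $\xi|_{\partial I}=0$.
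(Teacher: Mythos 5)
Your proposal is correct and follows essentially the same route as the paper: reduce the determinant to the squared modulus of the square overlap matrix $(\<\varphi_j,\psi_k\>)_{j,k\in S}$ (the paper's Lemma \ref{Fredholm:Repr}, which you obtain equivalently from $UU^*=\id$), evaluate it via the Cauchy determinant formula together with the residue identities for $|\<\varphi_j,\phi\>|^2$ and $|\<\psi_k,\phi\>|^2$ coming from $\det(B-z)=\det(A-z)\bigl(1+\<\phi,(A-z)^{-1}\phi\>\bigr)$ (Lemmas \ref{prod:thm1} and \ref{cor:det}), and match the resulting double product of logarithms with the elementary integral of $(y-x)^{-2}$ over the interlacing intervals. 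The only cosmetic difference is how the squareness of the overlap block is extracted from $\xi|_{\partial I}=0$; the substance is identical.
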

Similarly, we formulate Theorem \ref{prop2} for matrices:

\begin{lemma}\label{main:lem2}
Under the assumptions of Theorem \ref{prop2} on the set $I$, the identity
\beq\label{thm1:eq2}
-\ln \left(\det \big(\id - \id_{I}(A) \id_{I^c}(B)\id_{I}(A)\big) \right)=
 \int_I \d x \int_{I^c} \d y\, \frac{\big(\xi(x)- 1\big)\big( \xi(y) - 1\big)}{(y-x)^2}
\eeq
holds.
\end{lemma}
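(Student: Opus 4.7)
The plan is to piggyback on the argument establishing Lemma \ref{main:lem}. First I would re-run that calculation up to an intermediate stage that does not yet depend on the particular boundary value of $\xi$, and then handle the new assumption \eqref{thm:assump2} by performing integration by parts with $\xi-1$ in place of $\xi$.

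Using cyclicity of $\phi$ with respect to $A$, we may take $A,B$ to have simple interlacing spectra $a_1<a_2<\cdots$ and $b_1<b_2<\cdots$ with $a_i\le b_i\le a_{i+1}$. Writing $\phi=\sum_i c_i e_i=\sum_j \tilde c_j f_j$ in the two eigenbases, the relation $(A-b_j)f_j=-\tilde c_j\phi$ gives $\<e_i,f_j\>=c_i\tilde c_j/(b_j-a_i)$, and $\det(\id-\id_I(A)\id_{I^c}(B)\id_I(A))$ equals $|\det U|^2$ with $U$ the square Gram matrix of these overlaps indexed by $\{i:a_i\in I\}$ and $\{j:b_j\in I\}$. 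Squareness holds in both Lemma \ref{main:lem} and Lemma \ref{main:lem2} because $\xi(E_{2i-1})=\xi(E_{2i})$ forces equal $A$- and $B$-eigenvalue counts in each interval of $I$. Inserting the Aronszajn--Donoghue product formulas for $c_i^2$ and $\tilde c_j^2$ into the Cauchy determinant expansion of $|\det U|^2$ and cancelling, I expect the calculation---identical to the one for Lemma \ref{main:lem}---to yield the boundary-condition-independent master identity
\[
-\ln\det\big(\id-\id_I(A)\id_{I^c}(B)\id_I(A)\big)=\int_I\int_{I^c}\ln|y-x|\,d\xi(x)\,d\xi(y),
\]
where $d\xi=\sum_i\delta_{a_i}-\sum_j\delta_{b_j}$ is the Stieltjes measure of the step function $\xi$.

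The essential new input is to integrate by parts using $\xi-1$ rather than $\xi$: since $d\xi=d(\xi-1)$, the master identity also equals $\int_I\int_{I^c}\ln|y-x|\,d(\xi(x)-1)\,d(\xi(y)-1)$. On each interval $[E_{2i-1},E_{2i}]\subset I$ and fixed $y\in I^c$, integration by parts gives
\[
\int_{E_{2i-1}}^{E_{2i}}\ln|y-x|\,d(\xi(x)-1)=\big[(\xi(x)-1)\ln|y-x|\big]_{E_{2i-1}}^{E_{2i}}+\int_{E_{2i-1}}^{E_{2i}}\frac{\xi(x)-1}{y-x}\,dx,
\]
and the bracket vanishes by \eqref{thm:assump2}. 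Performing the analogous integration by parts across each connected component of $I^c$, the endpoint terms at points of $\partial I$ vanish for the same reason, and on the two unbounded components $(-\infty,E_1)$ and $(E_{2N},\infty)$ the boundary contributions at $\pm\infty$ vanish because $\xi$ has compact support in the matrix setting and $y\mapsto\int_I(\xi(x)-1)/(y-x)\,dx$ decays like $1/|y|$. The resulting double integral is exactly the right-hand side of \eqref{thm1:eq2}.

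The hard part will be the careful bookkeeping of these boundary terms in the two successive integrations by parts, especially at the noncompact ends of $I^c$, and verifying the absolute convergence required for Fubini. Beyond that, the whole argument reduces to the observation that subtracting the constant $1$ from $\xi$ simply swaps which of the two hypotheses $\xi|_{\partial I}\in\{0,1\}$ makes the endpoint terms disappear, so the appearance of $(\xi-1)(\xi-1)$ instead of $\xi\xi$ in the integrand is forced.
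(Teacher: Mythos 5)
Your proposal is correct, and its first half --- reduction to a square Gram determinant as in Lemma \ref{Fredholm:Repr}, the Cauchy determinant expansion, and the residue formulas of Lemma \ref{cor:det} --- coincides with the paper's route through Lemma \ref{prod:thm1}. Where you genuinely diverge is the endgame. The paper identifies the two index sets as a cyclic shift, $J_1+1=J_2$ (its \eqref{proof:main:eq10}), substitutes this into the eigenvalue product, writes $\xi-1=-\sum_n 1_{(b_n,a_{n+1}]}$, and matches the resulting double sum of logarithms term by term against the right-hand side of \eqref{thm1:eq2}. You instead package the product formula once and for all as the boundary-value-independent identity $-\ln\det(\cdots)=\int_I\int_{I^c}\ln|y-x|\,\d\xi(x)\,\d\xi(y)$ (which is indeed what $-\ln$ of the product in Lemma \ref{prod:thm1} says, for any admissible index sets) and recover both Lemma \ref{main:lem} and Lemma \ref{main:lem2} by integrating by parts with the antiderivative $\xi$ or $\xi-1$ chosen so that the hypothesis $\xi=0$, resp.\ $\xi=1$, on $\partial I$ kills the boundary terms. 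This is a cleaner finish: it explains why the integrand changes from $\xi\xi$ to $(\xi-1)(\xi-1)$ and makes the shift $J_1+1=J_2$ unnecessary. Two points need care. First, your squareness argument (``equal counts in each interval of $I$'') fails when $I$ has exactly one half-infinite component: there $\xi=1$ at the single finite endpoint forces the $A$- and $B$-eigenvalue counts to differ by one, the Gram matrix is not square and the determinant vanishes (cf.\ Proposition \ref{prop1}); the paper's convention $M+1:=1$ tacitly excludes this case, and you must do the same. Second, if $I$ is unbounded the inner integral $\int_I(\xi(x)-1)/(y-x)\,\dx$ produced by your first integration by parts diverges, so you should integrate by parts first over whichever of $I$, $I^c$ is bounded (in the non-degenerate case one of them always is); with that ordering the $\Oh(1/\abs{y})$ decay you invoke for the boundary contributions at infinity is justified and the bookkeeping closes.
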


To prove this we need some auxiliary results. 
In the following, we write $( a_i)_{i=1}^M$ and $( b_i)_{i=1}^M$ for the sequences of eigenvalues of $A$ and $B$ ordered non-decreasingly. Since we restricted ourselves to cyclic $\phi$, the eigenvalues interlace  strictly, i.e.
\beq\label{ineq:eigenvalues}
 a_1<  b_1<   a_2< \cdots<  a_M < b_M.
\eeq 
We denote by $\left(\varphi_j\right)_{j=1}^M$ and $\left(\psi_k\right)_{k=1}^M$ the corresponding normalised eigenvectors of $A$, respectively, of $B$.
In particular, the spectral shift function is given by 
\beq
\xi(E)=\tr \big(\id_{(-\infty,E)}(A) - \id_{(-\infty,E)}(B)\big) =  \sum_{n=1}^M 1_{( a_n, b_n]}(E)
\eeq
for $E\in \rho(A)\cap\rho(B)\cap \R$, see \cite[Prop. 11.11]{MR2154153}. Here, with $1_I$ we denote the indicator function of the set $I\subset \R$ on $\R$. 

\begin{lemma}\label{Fredholm:Repr}
Let $I\in \Borel(\R)$ such that the index sets $J_I(A):= \big\{j\in \{1,...,M\}:  a_j\in I\big\}$ and $J_I(B):= \big\{k \in \{1,...,M\}:  b_k\in I\big\}$ have the same cardinality. 
Then
\begin{align}
\det \big(\id - \id_{I}(A) \id_{I^c}(B)\id_{I}(A)\big)
&= \det \big(\id - \id_{I^c}(A) \id_{I}(B) \id_{I^c}(A)\big)\notag\\
&= \big|\det\big(\big\<\varphi_j,\psi_k\big\>\big)_{ j\in J_I(A), k\in J_I(B) }\big|^2.
\end{align}
\end{lemma}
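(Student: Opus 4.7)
The strategy is to reduce the Fredholm determinants to ordinary finite-dimensional determinants of Gram matrices built from the overlap coefficients $\<\varphi_j,\psi_k\>$, and then to use unitarity of the full overlap matrix.

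Setting $P:=\id_I(A)$ and $Q:=\id_I(B)$, I would first note that $P(\id-Q)P$ annihilates $\ran(P)^\perp$ and acts on $\ran(P)$ as $P-PQP$, so that $\id-P(\id-Q)P$ equals the identity on $\ran(P)^\perp$ and equals $PQP\big|_{\ran P}$ on $\ran(P)$. Consequently
\beq
\det\bigl(\id-\id_I(A)\id_{I^c}(B)\id_I(A)\bigr)=\det\bigl(PQP\big|_{\ran P}\bigr).
\eeq
Expanding $Q=\sum_{k\in J_I(B)}|\psi_k\>\<\psi_k|$ in the orthonormal basis $\{\varphi_j\}_{j\in J_I(A)}$ of $\ran P$, the matrix entries of the restriction are
\beq
\<\varphi_j,Q\varphi_{j'}\>=\sum_{k\in J_I(B)}\<\varphi_j,\psi_k\>\overline{\<\varphi_{j'},\psi_k\>}=(MM^*)_{jj'},
\eeq
with $M:=(\<\varphi_j,\psi_k\>)_{j\in J_I(A),\,k\in J_I(B)}$. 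Since $|J_I(A)|=|J_I(B)|$ by hypothesis, $M$ is square, so the determinant equals $\det(MM^*)=|\det M|^2$, which is the second equality in the claim.

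The same argument with the roles of $P$ and $\id-P$ exchanged (and using that $(\id-P)(\id-Q)(\id-P)$ agrees with $\id-(\id-P)Q(\id-P)$ on $\ran(\id-P)$) shows
\beq
\det\bigl(\id-\id_{I^c}(A)\id_I(B)\id_{I^c}(A)\bigr)=|\det N|^2,
\eeq
where $N:=(\<\varphi_j,\psi_k\>)_{j\in J_{I^c}(A),\,k\in J_{I^c}(B)}$ is again square because $|J_{I^c}(A)|=M-|J_I(A)|=M-|J_I(B)|=|J_{I^c}(B)|$.

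It remains to show $|\det M|=|\det N|$. For this I would use that the full overlap matrix $U:=(\<\varphi_j,\psi_k\>)_{j,k=1}^M$ is unitary, being the change-of-basis matrix between two orthonormal bases of $\C^M$. A classical identity---the Jacobi complementary-minor formula applied to $U^{-1}=U^*$, or equivalently a direct Schur-complement expansion of $UU^*=\id$ in the block decomposition induced by the partitions $\{1,\dots,M\}=J_I(A)\cup J_{I^c}(A)=J_I(B)\cup J_{I^c}(B)$---implies that the modulus of the determinant of a square principal-type submatrix of a unitary matrix equals that of its complementary submatrix. This yields $|\det M|=|\det N|$ and closes the chain of equalities. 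The only mildly delicate point is this unitary-minor identity; it is entirely standard, but if needed I would derive it in a line from $U^*U=\id$ via block determinants rather than quote it.
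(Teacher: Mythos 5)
Your argument is correct, and its first half coincides with the paper's: both proofs identify $\det\bigl(\id-\id_I(A)\id_{I^c}(B)\id_I(A)\bigr)$ with $\det\bigl(PQP\big|_{\ran P}\bigr)$ and then with $\det(MM^*)=|\det M|^2$ by writing the restriction of $Q$ in the eigenbasis $\{\varphi_j\}_{j\in J_I(A)}$ of $\ran P$. You diverge in how you reach the other Fredholm determinant. The paper never introduces the complementary submatrix $N$: it computes $\det\bigl(\id-\id_{I^c}(A)\id_I(B)\id_{I^c}(A)\bigr)$ by first passing to $\det\bigl(\id-Q(\id-P)Q\bigr)=\det\bigl(QPQ\big|_{\ran Q}\bigr)$ via the fact that $Q(\id-P)$ and $(\id-P)Q$ have the same non-zero singular values, and then expanding $QPQ$ in the basis $\{\psi_k\}_{k\in J_I(B)}$ of $\ran Q$, which produces $\mathcal O^*\mathcal O$ for the \emph{same} matrix $\mathcal O=M$; the chain $\det(\mathcal O\mathcal O^*)=|\det\mathcal O|^2=\det(\mathcal O^*\mathcal O)$ then closes the argument with no further input. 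You instead expand in the basis $\{\varphi_j\}_{j\in J_{I^c}(A)}$ of $\ran(\id-P)$, obtain $|\det N|^2$ for the complementary block $N$ of the unitary overlap matrix $U$, and invoke the Jacobi complementary-minor identity $|\det U[R,C]|=|\det U[R^c,C^c]|$ to conclude. That identity is correct and, as you note, follows in one line from $MM^*+M'M'^*=\id$, $M'^*M'+N^*N=\id$ and $\det(\id-XX^*)=\det(\id-X^*X)$ --- which is exactly the singular-value identity the paper uses at the operator level --- so the two proofs are mathematically equivalent but packaged differently: the paper's choice of basis makes the complementary-minor step unnecessary, while your route makes explicit the (perhaps more transparent) structural fact that the two section determinants are complementary minors of a single unitary matrix. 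Either version is acceptable; just make sure to record, as the paper does implicitly via the standing cyclicity assumption, that the eigenvalues are simple so that $\{\varphi_j\}$ and $\{\psi_k\}$ are genuine orthonormal bases indexed by $\{1,\dots,M\}$.
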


\begin{proof}[Proof of Lemma \ref{Fredholm:Repr}]
Define  $S:=\det \mathcal O$ where $\mathcal O$ is the $|J_I(A)|\times |J_I(B)|$ matrix
\beq
\mathcal O:= \big(\<\varphi_j, \psi_k\>\big)_{ j\in J_I(A), k\in J_I(B) }.
\eeq
Moreover, we use the abbreviations $P:=\id_{I}(A)$ and $Q:=\id_{I}(B)$.
  Then, the matrix entries of $\mathcal O\mathcal O^*$ and $\mathcal O^*\mathcal O$ read
  \begin{align}
  	(\mathcal O\mathcal O^*)_{jl} &= \sum_{k\in J_I(B)} \< \varphi_j, \psi_k\>\< \psi_k,\varphi_l\> 
  		= \< \varphi_j,P Q P\varphi_l\>, \\
  	(\mathcal O^*\mathcal O)_{jl} &= \sum_{k\in J_I(A)} \< \psi_j, \varphi_k\>\< \varphi_k,\psi_l\> 
			= \< \psi_j, Q P Q\psi_l\>.
  \end{align}
  Using the multiplicativity of the determinant, $|S|^2$ can be rewritten in two ways:
  \begin{align}
    \label{Fredholm:eq4}
    |S|^2 = \det ( \mathcal O\mathcal O^*)
	      &= \det \big( P Q P \big|_{\ran P}\big) \notag\\
	      &= \det \big( \id - P (\id- Q) P\big) ,
  \end{align}
  and likewise as
  \begin{align}
    \label{Fredholm:eq5}
    |S|^2 = \det (\mathcal O^*\mathcal O)
	      &=  \det \big( Q P Q  \big|_{\ran Q}\big) \notag\\
	      &=  \det\big(\id - Q(\id - P) Q \big) \notag\\
	      &=  \det\big(\id - (\id - P) Q (\id - P) \big).
  \end{align}
  The last equality follows from the fact that the non-zero singular values of $Q(\id - P)$ 
  coincide with the non-zero singular values of its adjoint $(\id - P)Q$. Now, equations 
  \eqref{Fredholm:eq4} and \eqref{Fredholm:eq5} give the assertion.
\end{proof}

\begin{lemma}\label{prod:thm1}
Let $J_1,J_2\subset \{1,...,M\}$ be two index sets with the same cardinality $N\in\N$, with $N\leq M$, i.e. $J_1=\{l_1,..,l_N\}$ and $J_2=\{m_1,...,m_N\}$ for some numbers $l_1< l_2 ...< l_N$ and $m_1 < m_2...< m_N$. Then,
 \begin{equation}\label{prod:rep}
  \Big|  \det\Bigparens{\<\varphi_{l_j}, \psi_{m_k}\>}_{1\le j,k\le N}\Big|^2=
  \prod_{j=1}^N\prod_{k= N+1}^M\frac{\left| b_{m_k}- a_{l_j}\right|\left| a_{l_k}- b_{m_j}\right|}{\left| a_{l_k}- a_{l_j}\right|\left| b_{m_k}- b_{m_j}\right|},
 \end{equation}
where $(l_k)_{k=N+1}^M$ is a sequence such that $\{l_1,...,l_N\} \cap \{l_{N+1},..., l_M\}= \emptyset$ and $\{l_1,..l_N\}\cup \{l_{N+1},...,l_M\}= \{1,...,M\}$. The sequence $(m_k)_{k=N+1}^M$ is defined accordingly.
\end{lemma}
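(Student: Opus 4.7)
My plan relies on three classical ingredients: the rank-one eigenvalue relation, the Cauchy determinant formula, and the partial-fraction expansion of the secular function. First, starting from $B\psi_k = b_k \psi_k$ together with $B = A + |\phi\>\<\phi|$, pairing with $\varphi_j$ and using that the strict interlacing \eqref{ineq:eigenvalues} forbids $a_j = b_k$ gives the factorisation
$$
\<\varphi_j, \psi_k\> = \frac{\<\varphi_j, \phi\>\<\phi, \psi_k\>}{b_k - a_j}.
$$
Consequently $\bigparens{\<\varphi_{l_j}, \psi_{m_k}\>}_{j,k=1}^N$ is a Cauchy matrix conjugated by diagonal matrices. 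Pulling the scalar factors out of the determinant reduces the task to evaluating $\det\bigparens{1/(b_{m_k} - a_{l_j})}_{j,k=1}^N$, which is handled by the classical Cauchy determinant formula.

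Second, I would compute $|\<\varphi_{l_j}, \phi\>|^2$ and $|\<\psi_{m_k}, \phi\>|^2$ from the secular identity
$$
1 + \<\phi, (A - z)^{-1}\phi\> = \frac{\det(B - z)}{\det(A - z)}
$$
by comparing partial-fraction decompositions and taking residues at $z = a_j$, and from the analogous identity with the roles of $A$ and $B$ interchanged. This yields the familiar closed forms
$$
|\<\varphi_{l_j}, \phi\>|^2 = \frac{\prod_{k'=1}^M |b_{k'} - a_{l_j}|}{\prod_{l \neq l_j}|a_l - a_{l_j}|}, \qquad |\<\psi_{m_k}, \phi\>|^2 = \frac{\prod_{j'=1}^M |a_{j'} - b_{m_k}|}{\prod_{l \neq m_k}|b_l - b_{m_k}|},
$$
where positivity and the absolute values are automatic thanks again to the interlacing.

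What remains is combinatorial bookkeeping. In each of the single-index products over $\{1,\ldots,M\}$ appearing above I split the range into $\{l_1,\ldots,l_N\}$ and $\{l_{N+1},\ldots,l_M\}$ (similarly for $m$). Under this split the ``diagonal'' block $\prod_{j,k=1}^N |a_{l_j} - b_{m_k}|^2$ is produced once in the numerator (by combining the $\varphi$- and $\psi$-contributions) and once in the denominator (from Cauchy) and therefore cancels; similarly the Vandermonde factors $\prod_{j<j'}|a_{l_{j'}} - a_{l_j}|^2$ and $\prod_{j<j'}|b_{m_{j'}} - b_{m_j}|^2$ appearing in the Cauchy numerator cancel against the identical factors in the denominators of the two secular formulas. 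What survives is exactly the product over $j \in \{1,\ldots,N\}$, $k \in \{N+1,\ldots,M\}$ displayed in \eqref{prod:rep}. The main obstacle is purely this combinatorial cancellation, which also clarifies why the complementary indices $l_{N+1},\ldots,l_M$ and $m_{N+1},\ldots,m_M$ survive on the right-hand side of the claim: they are precisely the residue left over after the diagonal and Vandermonde pieces annihilate.
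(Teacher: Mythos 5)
Your proposal is correct and follows essentially the same route as the paper: the eigenvalue identity $\<\varphi_j,\psi_k\>=\<\varphi_j,\phi\>\<\phi,\psi_k\>/(b_k-a_j)$, the Cauchy determinant formula, and the residue computation of $|\<\varphi_j,\phi\>|^2$ and $|\<\psi_k,\phi\>|^2$ from $1+\<\phi,(A-z)^{-1}\phi\>=\det(B-z)/\det(A-z)$ (the paper's Lemma~\ref{cor:det}), followed by the same cancellation of the diagonal and Vandermonde blocks. The only cosmetic difference is that the paper first reduces to $J_1=J_2=\{1,\dots,N\}$ by relabelling, whereas you carry the general index sets through the bookkeeping directly.
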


\begin{remark}
If $J_1=J_2= \{1,2...,N\}$ formula \eqref{prod:rep} reads
\beq\label{prod:rep2}
  \Big|  \det\Bigparens{\<\varphi_j, \psi_k\>}_{j\in J_1, k\in J_2}\Big|^2=
  \prod_{j=1}^N\prod_{k=N+1}^M\frac{\left| b_k- a_j\right|\left| a_k- b_j\right|}{\left| a_k- a_j\right|\left| b_k- b_j\right|}.
\eeq
Such a formula is known in the physics literature in the context of scalar products of ground states of non-interacting fermionic systems and goes back at least to \cite{RevModPhys.62.929}. In this context it was also revisited in \cite{MR3405952}. 
\end{remark}

\begin{proof}[Proof of Lemma \ref{prod:thm1}] 
The strict interlacing of the eigenvalues, see \eqref{ineq:eigenvalues}, implies that none of the denominators in the above product vanishes and the product is well-defined. 
In the following, we assume w.l.o.g. that $J_1=J_2=\{1,...,N\}$. The general case follows from this case after relabelling. 
 The eigenvalue equations imply for all $j,k\in\N$
\begin{equation}
 \<\varphi_j,\psi_k\>=\frac{\<\varphi_j,\phi\>\<\phi,\psi_k\>}{ b_k- a_j}.
\end{equation}
This and the properties of the determinant give
\begin{align}
 \Big|  \det\Bigparens{\<\varphi_j, \psi_k\>}_{1\le j,k \le N}\Big|^2
=& \Big|  \det\Bigparens{\frac{\<\varphi_j,\phi\>\<\phi,\psi_k\>}{ b_k- a_j}}_{1\le j,k \le N}\Big|^2\nonumber\\
=& \prod_{j=1}^N\prod_{k=1}^N\left|\<\varphi_j,\phi\>\<\phi,\psi_k\>\right|^2 \Big|\det\Bigparens{\frac 1 { b_k- a_j}}_{1\le j,k \le N}\Big|^2.\label{determinant}
\end{align}
The remaining determinant $\det\big( \frac 1 { b_k- a_j}\big)_{1\le j,k \le N} $ is the determinant of a Cauchy matrix. Such Cauchy determinants can be computed explicitely, see e.g. \cite[Lem. 7.6.A]{zbMATH06257273}, and we end up with 
\begin{align}
\eqref{determinant}
= \prod_{j=1}^N\prod_{k=1}^N\left|\<\varphi_j,\phi\>\<\phi,\psi_k\>\right|^2 \frac{ \prod_{j,k=1,j\neq k}^N\left| b_k- b_j\right|\left| a_j- a_k\right|}{\prod_{j,k=1}^N \left| b_k- a_j\right|^2}.\label{det:eq3}
\end{align}
The remaining scalar products of the eigenvectors can be computed explicitly, see Lemma \ref{cor:det} in the appendix. This implies
\begin{align}\label{determinant3}
\eqref{det:eq3}
=&   
\prod_{k=1}^N 
 \prod_{\substack{l=1\\l\neq k}}^M\frac{ \left| a_l- b_k\right|}{ \left| b_l- b_k\right|}
 \prod_{j=1}^N 
 \prod_{\substack{l=1\\l\neq j}}^M\frac{ \left| b_l- a_j\right|}{\left| a_l- a_j\right|}\nonumber
 \prod_{\substack{j,k=1\\j\neq k}}^N\frac{\left| b_k- b_j\right|\left| a_j- a_k\right|}{\left| b_k- a_j\right|^2}\\
 =&
 \prod_{j=1}^N\prod_{k=N+1}^M \frac{\left| b_k- a_j\right|\left|  a_k- b_j\right|}{ | b_k- b_j|| a_j- a_k|}.
\end{align}
Now, the assertion follows from \eqref{determinant}, \eqref{det:eq3} and \eqref{determinant3}.
\end{proof}

\begin{proof}[Proof of Lemma \ref{main:lem}]
We define the sets $J_1:=\{m_1,m_2,...,m_N\}= \{k\in\{1,...,M\}: b_k\in I\}$ and $J_2:=\{l_1, l_2,...,l_N\} =\{j\in\{1,...,M\}: a_j\in I\}$ and the sequences $\{m_{N+1},...m_M\}$ and $\{l_{N+1},...,l_M\}$ as in Lemma~\ref{prod:thm1}. Assumption \eqref{thm:assump} implies that these index sets coincide, i.e. $J_1=J_2=:J$.
Lemma \ref{Fredholm:Repr} and Lemma 
~\ref{prod:thm1} yield
\begin{align}\label{proof:main:eq1}
\det \big(\id - \id_{I}(A) \id_{I^c}(B)\id_{I}(A)\big) 
&=
  \prod_{j=1}^N\prod_{k= N+1}^M\frac{\left| b_{m_k}- a_{l_j}\right|\left| a_{j_k}- b_{m_j}\right|}{\left| a_{l_k}- a_{l_j}\right|\left| b_{m_k}- b_{m_j}\right|}\notag\\
&=
  \prod_{j\in J}\prod_{k\notin J}\frac{\left| b_{k}- a_{j}\right|\left| a_{k}- b_{j}\right|}{\left| a_{k}- a_{j}\right|\left| b_{k}- b_{j}\right|},
\end{align}
where  and we used in the last line that $J_1=J_2=J$.

On the other hand, the spectral shift function is given by 
\beq
\xi(x) = \sum_{n=1}^M 1_{( a_n, b_n]}(x).
\eeq
Inserting this in the integral on the r.h.s of \eqref{thm1:eq2a}, we compute
\begin{align}
\int_I \d x \int_{I^c} \d y \frac{\xi(x)\xi(y)}{(y-x)^2}
 =&
\sum_{j=1}^M \sum_{k=1}^M \int_I \d x \int_{I^c} \d y\frac{1_{( a_j, b_j]}(x) 1_{( a_k, b_k]}(y)}{(y-x)^2}\notag\\
=&
\sum_{j\in J} \sum_{k \notin J} \int_{ a_j}^{ b_j}  \int_{ a_k}^{ b_k} \frac{\d x \d y}{(y-x)^2}.
\end{align}
Integrating the latter, gives
\beq
\sum_{j\in J} \sum_{k \notin J} \int_{ a_j}^{ b_j}  \int_{ a_k}^{ b_k} \frac{\d x \d y}{(y-x)^2}
= 
\sum_{j\in J}\sum_{k\notin J}\ln\left(\frac{\left| a_k- a_j\right|\left| b_k- b_j\right|}{\left| b_k- a_j\right|\left| a_k- b_j\right|}\right).
\eeq
This gives the assertion taking the logarithm in \eqref{proof:main:eq1}. 
\end{proof}

\begin{proof}[Proof of Lemma \ref{main:lem2}]
We define the sets $J_1:=\{m_1,m_2,...,m_N\}= \{k\in\{1,...,M\}: b_k\in I\}$ and $J_2:=\{l_1, l_2,...,l_N\} =\{j\in\{1,...,M\}: a_j\in I\}$ and the sequences $\{m_{N+1},...m_M\}$ and $\{l_{N+1},...,l_M\}$ as in Lemma~\ref{prod:thm1}. These sets have the same cardinality $|J_1|=|J_2|$ by \eqref{thm:assump} but $J_1\neq J_2$. However, both sets are still very close: Since the spectral shift function is $1$ on $\partial I$, we obtain 
\beq\label{proof:main:eq10}
J_1+1=\{m_1+1,m_2+1,...,m_N+1\}=J_2,
\eeq
 where we used the convention $M+1:=1$ here. We apply Lemma \ref{Fredholm:Repr} and Lemma \ref{prod:thm1} to the l.h.s. of \eqref{thm1:eq2} and
obtain the product representation 
\begin{align}\label{proof:main:eq1b}
\det \big(\id - \id_{I}(A) \id_{I^c}(B)\id_{I}(A)\big) 
&=
  \prod_{j=1}^N\prod_{k= N+1}^M\frac{\left| b_{m_k}- a_{l_j}\right|\left| a_{l_k}- b_{m_j}\right|}{\left| a_{l_k}- a_{l_j}\right|\left| b_{m_k}- b_{m_j}\right|}\notag\\
  &=
  \prod_{j\in J_1}\prod_{k\notin J_1}\frac{\left| b_{k}- a_{{j}+1}\right|\left| a_{k+1}- b_{j}\right|}{\left| a_{k+1}- a_{j+1}\right|\left| b_{k}- b_{j}\right|},
\end{align}
where we used \eqref{proof:main:eq10} in the last line. 

On the other hand, we compute
\begin{align}
\xi(x)-1 
= - \sum_{n=1}^{M-1} 1_{( b_n, a_{n+1}]}(x) - 1_{\R\backslash ( a_1, b_M]}(x)
= -\sum_{n=1}^M 1_{( b_n, a_{n+1}]}(x),
\end{align}
where $1_{( b_M, a_{M+1}]}:= 1_{\R\backslash ( a_1, b_M]}(x)$.
Inserting this in the r.h.s of \eqref{thm1:eq2} and 
evaluating the integral, gives
\begin{align}
\int_I \d x \int_{I^c} \d y \frac{(\xi(x)-\1)(\xi(y)-1)}{(y-x)^2}
 =&
\sum_{j=1}^M \sum_{k=1}^M \int_I \d x \int_{I^c} \d y\frac{1_{( b_j, a_{j+1}]}(x) 1_{( b_k, a_{k+1}]}(y)}{(y-x)^2}\notag\\
=&
\sum_{j\in J_1}\sum_{k \notin J_1} \int_{ b_j}^{ a_{j+1}}  \int_{ b_k}^{ a_{k+1}} \frac{\d x \d y}{(y-x)^2}
\notag\\
=&
\sum_{j\in J_1}\sum_{k\notin  J_1}\ln\left(\frac{\left| a_{k+1}- a_{j+1}\right|\left| b_k- b_j\right|}{\left| b_k- a_{j+1}\right|\left| a_{k+1}- b_j\right|}\right).
\end{align}
This and taking the logarithm in \eqref{proof:main:eq1b} implies the assertion. 
\end{proof}

\subsection{General bounded operators: An approximation argument}\label{approx}

In this section we lift the results obtained for matrices in the previous section to general bounded operators. To do so, we define particular finite-rank approximations which respect the spectral gaps of the limiting operators up to a security distance.

We assume in this section that $A$ and $B$ are bounded self-adjoint operators which satisfy \eqref{determinant:def1} and $I$ be such that the assumptions of Theorem \ref{thm:main} are met. 
Additionally, we assume w.l.o.g. that $0\in \sigma(A)$ which can always be achieved by adding a multiple of the identity. 
Let $(\varphi_m)_{m\in\N}$ be an orthonormal basis of $\H$ and $I_M:= \sum_{m=1}^M \bra{\varphi_m}\ket{\varphi_m}$ be the projection on the first $M$ basis vectors of this basis. For $M\in\N$, we define
\beq\label{def:restirc}
 \wtilde A_M:= I_M A I_M. 
 \eeq
Since we assumed $A$ and $B$ to be bounded, it follows directly that $\wtilde A_M$ converges strongly to $A$ as $M\to\infty$. Since $\partial I\subset \rho(A)\cap \rho(B)$ and $\partial I$ is a finite set, we obtain that 
\beq
\delta:=\dist(\partial I,\sigma(A)\cup \sigma(B))>0
\eeq
 and we choose
\beq
0<\varepsilon<\delta /2.
\eeq
We define an enlarged spectrum $\sigma_\varepsilon(A):=\{x\in\R:\, \dist(x,\sigma(A))\leq \varepsilon\}$ and set
\beq\label{def:AMBM}
A_M:=  \wtilde A_M\id_{\sigma_\varepsilon(A)}(\wtilde A_M)\qquad \text{and}\qquad B_M:= A_M +  I_M  \bra{\phi}\ket{\phi} I_M.
\eeq
Since $\partial \sigma_\varepsilon(A)\subset \rho(A)$, we have that $\id_{\sigma_\varepsilon(A)}(\wtilde A_M)\to \id$ strongly as $M\to\infty$, see e.g. \cite[Cor. 6.40]{MR2499016}.
Hence, $A_M$ converges strongly to $A$ as $M\to\infty$. Using the strong convergence $I_M\phi\to\phi$, we obtain as well that $B_M\to B$ strongly as $M\to\infty$. 

Let us first comment on the spectral structure of the operators $A_M$ and $B_M$. 
The construction of the operator $A_M$ implies that for $\eta:= \delta-\varepsilon>0$
\beq\label{pf:eq1}
\bigcup_{i=1}^{2N}(E_i-\eta, E_i+\eta) \subset \rho(A_M).
\eeq
  Here we need that $0\in \sigma(A)$. Otherwise, it might happen that by accident $0\in (E_i-\eta,E_i+\eta)$ for some $i\in \{1,...,2N\}$ and the inclusion \eqref{pf:eq1} might fail.
 In the following, we use the notation
\beq\label{def:xiM}
\xi_M(E):=\tr \big( \id_{(-\infty,E)}(A_M) - \id_{(-\infty,E)}(B_M) \big)\in  \{0,1\}
\eeq
for the spectral shift function corresponding to the pair $A_M, B_M$ of restricted operators at energy $E\in\R$. 
Concerning the spectrum of the operator $B_M$, we know the following:
 
\begin{lemma}\label{lemSSF}
Let $\eta:=\delta-\varepsilon$. Then there exists some $M_0\in\N$ such that for all $M\geq M_0$
\beq
\bigcup_{i=1}^{2N}[E_{i},E_{i} +\eta)\subset \rho(B_M).
\eeq
Moreover,  the spectral shift function satisfies $\xi_M(E)=0$ for all $E\in \bigcup_{i=1}^{2N}[E_{i},E_{i} +\eta)$ and all $M\geq M_0$. 
\end{lemma}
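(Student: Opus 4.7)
The plan is to exploit the rank-one structure of $B_M - A_M$ via Krein's formula. For $E \in \rho(A_M)$, the operator $B_M - E$ is invertible if and only if the Krein determinant
\[
F_M(E) := 1 + \langle I_M\phi, (A_M - E)^{-1} I_M\phi \rangle
\]
is non-zero, and in this case the definition \eqref{SSF:def} of the spectral shift function applied to the pair $(A_M, B_M)$ with perturbation vector $I_M\phi$ gives $\xi_M(E) = \frac{1}{\pi}\arg F_M(E)$, so $\xi_M(E) = 0$ precisely when $F_M(E) > 0$. It therefore suffices to show $F_M(E) > 0$ for every $E \in [E_i, E_i + \eta)$ and every $i \in \{1,\ldots,2N\}$, provided $M$ is large enough.

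The first step is to promote positivity at a single point to positivity on the whole half-open interval via strict monotonicity. Since $(E_i - \eta, E_i + \eta) \subset \rho(A_M)$ and $(A_M - E)^{-1}$ is self-adjoint on this real interval, one computes
\[
F_M'(E) = \|(A_M - E)^{-1} I_M\phi\|^2 \geq 0,
\]
and the norm convergence $I_M\phi \to \phi \neq 0$ yields $I_M\phi \neq 0$ for $M$ large, so $F_M$ is strictly increasing on $(E_i - \eta, E_i + \eta)$. Consequently, the single inequality $F_M(E_i) > 0$ automatically gives $F_M(E) \geq F_M(E_i) > 0$ for every $E \in [E_i, E_i + \eta)$.

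The problem then reduces to controlling $F_M(E_i)$. The hypothesis $\xi(E_i) = 0$ together with $E_i \in \rho(A) \cap \rho(B)$ translates, again via Krein's formula for the limiting pair $(A, B)$, into $F(E_i) := 1 + \langle\phi, (A - E_i)^{-1}\phi\rangle > 0$. The uniform bound $\|(A_M - E_i)^{-1}\| \leq 1/\eta$, valid since by construction \eqref{def:AMBM} one has $\mathrm{dist}(E_i, \sigma(A_M)) \geq \eta$, combined with the strong convergence $A_M \to A$ and the second resolvent identity, yields $(A_M - E_i)^{-1} \to (A - E_i)^{-1}$ strongly; together with $I_M\phi \to \phi$ in norm this implies $F_M(E_i) \to F(E_i) > 0$. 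Hence $F_M(E_i) > 0$ for all $M$ beyond some $M_0(i) \in \mathbb{N}$, and $M_0 := \max_{1 \leq i \leq 2N} M_0(i)$ handles all boundary points simultaneously.

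The main delicate point is the sign dictionary between $\xi$ and the Krein determinant: one must extract from $\xi(E_i) = 0$ the strict inequality $F(E_i) > 0$ (not merely $F(E_i) \neq 0$), and symmetrically use $F_M(E_i) > 0$ to conclude $\xi_M \equiv 0$ throughout $[E_i, E_i + \eta)$. Once this dictionary is in place, the remaining ingredients, namely strict monotonicity of $F_M$ on the gap and strong resolvent convergence at the single point $E_i$, are elementary consequences of the construction \eqref{def:AMBM}.
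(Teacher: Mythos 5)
Your proof is correct, but it takes a genuinely different route from the paper's. The paper argues by contradiction: it uses the interlacing structure to note that at most one eigenvalue of $B_M$ can sit in the gap $(E_i-\eta,E_i+\eta)$ of $A_M$, and then rules out a subsequence of such eigenvalues accumulating there by testing the vague convergence $\xi_M\to\xi$ of Lemma \ref{lemma2} (itself proved via the Birman--Solomyak formula) against the fact that $\xi\equiv 0$ on $(E_i-\delta,E_i+\delta)$. You instead work directly with the Krein determinant $F_M(E)=1+\<I_M\phi,(A_M-E)^{-1}I_M\phi\>$, using the Aronszajn--Donoghue dictionary ($E\in\rho(B_M)$ and $\xi_M(E)=0$ iff $F_M(E)>0$ on $\rho(A_M)$), the monotonicity $F_M'=\|(A_M-E)^{-1}I_M\phi\|^2\ge 0$ on the gap, and the pointwise convergence $F_M(E_i)\to F(E_i)>0$ coming from the uniform resolvent bound $\|(A_M-E_i)^{-1}\|\le 1/\eta$ guaranteed by \eqref{pf:eq1} together with strong convergence $A_M\to A$ and $I_M\phi\to\phi$. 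Your argument is direct rather than by contradiction, does not need Lemma \ref{lemma2} at all, and transparently explains the one-sidedness of the conclusion (positivity propagates only to the right of $E_i$ because $F_M$ is increasing, matching the fact that the non-negative perturbation pushes spectrum upward); the paper's argument, by contrast, recycles machinery (Lemma \ref{lemma2}) that it needs anyway for Proposition \ref{prop:conv2} and stays entirely at the level of the trace definition \eqref{def:xiM}. The only point you pass over silently is the identification of $\xi_M$ as defined in \eqref{def:xiM} with the argument of the Krein determinant as in \eqref{SSF:def}; this equivalence on $\rho(A_M)\cap\rho(B_M)$ is standard for rank-one perturbations and is the same fact the paper invokes for \eqref{SSF:resolvent}, so it is a citation, not a gap.
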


We accept this for the moment and prove it at the end of this paragraph. In particular, the above implies for all $M$ big enough that
\beq
\dist(I,(\sigma(A_M)\cup \sigma(B_M))\backslash I)\geq \eta.
\eeq
Hence, we are in position to apply the following general result. It allows for representing products of spectral projections as particular Bochner integrals:

\begin{lemma}\label{lem:intrep}
Let $C,D$ be two self-adjoint operators with $D-C:=V\in \S^1$. Let $J,K\in \Borel(\R)$ be two sets with $\dist(J,K)>0$. Let $f\in L^1(\R)$ such that $\hat f (s)= \frac 1 s$ for all $s\in J-K:=\{c-d:\ c\in J, d\in K\}$. Then 
\beq\label{bochner:eq1}
\id_J(C)\id_K(D) = \int_\R \d t\,  e^{-it C} \id_J(C)V \id_K(D) e^{itD} f(t)
\eeq
where the integral is a trace-class convergent Bochner integral. In particular, $\id_J(C)\id_K(D)$ is trace class. 
\end{lemma}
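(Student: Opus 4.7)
The proof splits into two parts: (a) trace-norm convergence of the Bochner integral, and (b) the operator identity, which I would verify at the level of sesquilinear forms using the spectral theorem.

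For (a), the pointwise bound
$$\|e^{-itC}\id_J(C)V\id_K(D)e^{itD}\|_{\S^1}\le\|V\|_{\S^1}$$
is immediate, since the flanking unitaries and orthogonal projections are contractions and $\S^1$ is a two-sided ideal of $\cB(\H)$. Strong continuity of $t\mapsto e^{\mp itC},\,e^{\mp itD}$, combined with a finite-rank approximation of $V$ in trace norm and a three-epsilon argument, promotes this to trace-norm continuity of $F(t):=e^{-itC}\id_J(C)V\id_K(D)e^{itD}$. Since $\|f(t)F(t)\|_{\S^1}\le|f(t)|\|V\|_{\S^1}$ is integrable by $f\in L^1(\R)$, the integral is Bochner $\S^1$-convergent. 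Trace classness of $\id_J(C)\id_K(D)$ then follows a posteriori from the identity in (b).

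For (b), I would test \eqref{bochner:eq1} against arbitrary $\phi,\psi\in\H$. Writing $\phi_J:=\id_J(C)\phi$, $\psi_K:=\id_K(D)\psi$ and using the singular value decomposition $V=\sum_n s_n|u_n\>\<v_n|$ (absolutely convergent in $\S^1$),
$$\langle\phi,F(t)\psi\rangle=\sum_n s_n\,\langle e^{itC}\phi_J,u_n\rangle\,\langle v_n,e^{itD}\psi_K\rangle.$$
By the spectral theorem each factor is the Fourier transform of a finite complex measure supported on $J$, respectively $K$. Multiplying by $f(t)$ and interchanging the $t$-integral with the sum and the two spectral integrals (absolute convergence is supplied by step (a) and Fubini) collapses the inner $t$-integration to $\hat f(\lambda-\mu)=(\lambda-\mu)^{-1}$ on $(\lambda,\mu)\in J\times K$. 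The factor $(\mu-\lambda)$ produced in the integrand by $V=D-C$ then cancels against $(\lambda-\mu)^{-1}$ (up to a sign absorbed into the Fourier convention), so that the double spectral integral reduces to $\langle\phi_J,\psi_K\rangle=\langle\phi,\id_J(C)\id_K(D)\psi\rangle$, as desired.

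The main obstacle is that $V$ does not commute with either $C$ or $D$, so one cannot treat it as a scalar multiplier $\mu-\lambda$ in a joint spectral representation of $(C,D)$; the SVD is used precisely to decouple the $C$- and $D$-sides into two independent one-dimensional spectral integrations, and the bookkeeping of absolute convergence of the resulting triple sum-integral must be done carefully. An equivalent, perhaps cleaner, route would exploit the Duhamel formula $\tfrac{d}{dt}(e^{-itC}e^{itD})=ie^{-itC}Ve^{itD}$: integrating by parts in $t$ against $f$ trades the factor $V$ in the integrand for a derivative $f'(t)$ and uses $\widehat{f'}(s)=is\,\hat f(s)=i$ on $J-K$, at the cost of handling boundary terms carefully when $f$ is merely $L^1$.
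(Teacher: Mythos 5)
Your part (a) is sound and is essentially what the paper does for the convergence statement: the pointwise bound $\|e^{-itC}\id_J(C)V\id_K(D)e^{itD}\|_{\S^1}\le\|V\|_{\S^1}$ (you correctly write $\le$ where the paper even claims equality) together with $f\in L^1(\R)$ and strong measurability gives $\S^1$-Bochner convergence. For the identity itself, however, the paper computes nothing: it observes that $X=\id_J(C)\id_K(D)$ satisfies the Sylvester equation $XD-CX=\id_J(C)V\id_K(D)$ and quotes \cite[Thm.~3.1]{MR3231246} for the weakly convergent integral representation of that solution, which the trace-norm bound then upgrades to a Bochner integral. Your part (b) replaces this citation by a direct verification, and that is where there is a genuine gap.

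After the SVD and the $t$-integration you arrive at $\sum_n s_n\int_J\int_K(\lambda-\mu)^{-1}\,d\langle E_C(\lambda)\phi_J,u_n\rangle\,d\langle v_n,E_D(\mu)\psi_K\rangle$. The factor $(\mu-\lambda)$ you want to cancel against $(\lambda-\mu)^{-1}$ does not occur in any single summand; it only appears after you resum over $n$ to reconstitute $V$ and substitute $V=D-C$, i.e.\ after rewriting the sum as $\int_{J\times K}(\lambda-\mu)^{-1}(\mu-\lambda)\,d^2\langle E_C(\lambda)\phi_J,E_D(\mu)\psi_K\rangle$. But for non-commuting $C,D$ the set function $(S,T)\mapsto\langle E_C(S)\phi_J,E_D(T)\psi_K\rangle$ is only a bimeasure and need not extend to a complex measure of bounded variation on $J\times K$, so the pointwise cancellation $(\mu-\lambda)(\lambda-\mu)^{-1}=-1$ inside a double integral is precisely the illegitimate ``scalar multiplier'' step you yourself identify as the main obstacle; the SVD decoupling removes the very factor that is supposed to do the cancelling. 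The step can be repaired without the bimeasure: integrate in $\mu$ first to get $\int_K(\lambda-\mu)^{-1}\,d\langle v_n,E_D(\mu)\psi_K\rangle=\langle v_n,(\lambda-D)^{-1}\psi_K\rangle$ (well defined since $\dist(J,K)>0$), resum to $\int_J d\langle E_C(\lambda)\phi_J,\,V(\lambda-D)^{-1}\psi_K\rangle$, and use the resolvent identity $V(\lambda-D)^{-1}=(\lambda-C)(\lambda-D)^{-1}-\id$: the $-\id$ term produces $-\langle\phi_J,\psi_K\rangle$ (the sign is absorbed in the Fourier convention, as you note), while the other term vanishes because $(\lambda-C)\,dE_C(\lambda)$ has vanishing diagonal, which one checks by a Riemann-sum estimate using $\|(\lambda_j-C)E_C(S_j)\|\le\diam S_j$. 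Alternatively, your Duhamel/integration-by-parts route, or simply quoting the Sylvester-equation representation as the paper does, closes the argument; as written, the crucial cancellation is asserted rather than proved.
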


\begin{remark}
This identity is well-known and a main tool in estimating norms of spectral subspaces of pairs of self-adjoint operators, see e.g. \cite[Chap. 7]{MR1477662}.
\end{remark}

\begin{proof}
We set $Y:=\id_J(C) V\id_K(D)$. Since $\id_J(C)\id_K(D)$ solves the Sylvester equation $XD-CX=Y$ strongly, we obtain from e.g. \cite[Thm. 3.1]{MR3231246} that the identity \eqref{bochner:eq1} holds where the integral is a priori understood in the weak sense. 
However, the bound
\beq
\| e^{-it C} \id_J(C)V \id_K(D) e^{itD} \|_1=\|V\|_1 
\eeq
holds, where $\|\cdot\|_1$ stands for the trace norm. This together with $f\in L^1(\R)$ implies that the integral in \eqref{bochner:eq1} is not just a weakly convergent but a well-defined trace-class convergent Bochner integral and the identity \eqref{bochner:eq1} follows. 
\end{proof}

\begin{proposition}\label{prop:conv1}
Let $I$ be the set given in Theorem \ref{thm:main}. Then we obtain that the limits
\beq\label{lem1:eq1}
\lim_{M\to\infty} \id_{I}(A_M)\id_{I^c}(B_M) = \id_{I}(A)\id_{I^c}(B)
\eeq
and
\beq\label{lem1:eq3}
\lim_{M\to\infty} \id_{I^c}(A_M)\id_{I}(B_M) = \id_{I^c}(A)\id_{I}(B)
\eeq
exist in trace class.
In particular, $\id_{I}(A)\id_{I^c}(B),\, \id_{I^c}(A)\id_{I}(B)\in \S^1$. Moreover, 
\beq\label{lem1:eq2}
\lim_{M\to\infty} \det\big(\id - \id_{I}(A_M)\id_{I^c}(B_M)\id_{I}(A_M) \big) = \det\big(\id - \id_{I}(A)\id_{I^c}(B)\id_{I}(A) \big)
\eeq
and 
\beq\label{lem1:eq4}
\lim_{M\to\infty} \det\big(\id - \id_{I^c}(A_M)\id_{I}(B_M)\id_{I^c}(A_M) \big) = \det\big(\id - \id_{I^c}(A)\id_{I}(B)\id_{I^c}(A) \big).
\eeq
\end{proposition}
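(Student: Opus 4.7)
The plan is to realize each product $\id_I(\cdot)\id_{I^c}(\cdot)$ through the Bochner integral representation of Lemma \ref{lem:intrep} for both the finite-rank pairs $(A_M,B_M)$ and the limiting pair $(A,B)$, and then to pass $M\to\infty$ under the integral using the strong convergences $A_M\to A$, $B_M\to B$ together with a spectral gap that is uniform in $M$. The first step is therefore to produce such a uniform gap: I would establish that there exist $\eta>0$ and $M_0\in\N$ with
$$\dist\bigl(I\cap\sigma(A_M),\,I^c\cap\sigma(B_M)\bigr)\ge\eta\quad\text{and}\quad\dist\bigl(I^c\cap\sigma(A_M),\,I\cap\sigma(B_M)\bigr)\ge\eta$$
for all $M\ge M_0$, together with the same statement for $(A,B)$. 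By construction $\sigma(A_M)\subset\sigma_\varepsilon(A)$ lies at distance at least $\eta:=\delta-\varepsilon$ from each boundary point $E_i$, while Lemma \ref{lemSSF} tells us that $\sigma(B_M)$ misses $\bigcup_i[E_i,E_i+\eta)$. Combining these one-sided pieces of information with the strict interlacing of the eigenvalues of the rank-one-perturbed pair $A_M,B_M$ handles both endpoint types $E_{2i-1}$ and $E_{2i}$ of $I$.

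Next, I would pick $f\in L^1(\R)$ with $\hat f(s)=1/s$ on $\{|s|\ge\eta\}$ — for instance the inverse Fourier transform of $(1-\varphi(s))/s$ for some $\varphi\in C_c^\infty$ supported in $(-\eta,\eta)$ and equal to $1$ near $0$ — and apply Lemma \ref{lem:intrep} to $(C,D,J,K)=(A_M,B_M,I,I^c)$ with $V_M:=I_M|\phi\>\<\phi|I_M$, obtaining
$$\id_I(A_M)\id_{I^c}(B_M)=\int_\R e^{-itA_M}K_M e^{itB_M}\,f(t)\,\dt,\quad K_M:=\id_I(A_M)V_M\id_{I^c}(B_M),$$
and the analogous formula for the limit. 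Since $V_M$ is rank one, so is $K_M=|\id_I(A_M)I_M\phi\>\<\id_{I^c}(B_M)I_M\phi|$; using $I_M\phi\to\phi$ in norm and the strong convergences $\id_I(A_M)\to\id_I(A)$, $\id_{I^c}(B_M)\to\id_{I^c}(B)$ (valid because $\partial I\subset\rho(A)\cap\rho(B)$), both defining vectors converge in norm, hence $K_M\to K:=\id_I(A)V\id_{I^c}(B)$ in $\S^1$. A singular-value-decomposition argument shows that for any $T\in\S^1$ and any uniformly bounded sequence $U_M\to U$ strongly, $TU_M\to TU$ and $U_MT\to UT$ in $\S^1$; applied to $e^{\pm itA_M}$ and $e^{\pm itB_M}$ this yields pointwise trace-norm convergence of the integrand, with the uniform bound $\|e^{-itA_M}K_Me^{itB_M}\|_1\le\|\phi\|^2|f(t)|\in L^1(\R,\dt)$. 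Dominated convergence for Bochner integrals then delivers \eqref{lem1:eq1}, and \eqref{lem1:eq3} follows identically after swapping the roles of $I$ and $I^c$.

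For the determinants, combining the trace-norm convergence of $\id_I(A_M)\id_{I^c}(B_M)$ with the strong convergence $\id_I(A_M)\to\id_I(A)$ and the same $\S^1$-continuity statement gives $\id_I(A_M)\id_{I^c}(B_M)\id_I(A_M)\to\id_I(A)\id_{I^c}(B)\id_I(A)$ in trace norm, whence the trace-norm continuity of $\det(\id-\cdot)$ produces \eqref{lem1:eq2}; the proof of \eqref{lem1:eq4} is identical. The main technical obstacle is the first step: turning the one-sided information of Lemma \ref{lemSSF} into a two-sided, $M$-uniform gap between $I\cap\sigma(A_M)$ and $I^c\cap\sigma(B_M)$ (and the swapped version), since a rank-one perturbation can in principle push an eigenvalue $b_k$ of $B_M$ close to $\partial I$. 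Once this gap is in hand, everything else reduces to standard trace-norm continuity arguments.
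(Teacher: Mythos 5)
Your overall strategy coincides with the paper's: realize each product through the Bochner integral of Lemma \ref{lem:intrep}, exploit the rank-one structure of $V_M$ so that the integrand converges in trace norm pointwise in $t$, dominate by $\|\phi\|^2|f(t)|$, and conclude with trace-norm continuity of $\det(\id-\,\cdot\,)$; that part of your argument matches the paper step for step. The one place you deviate is also the one step that does not go through as written: you invoke Lemma \ref{lem:intrep} with $(J,K)=(I,I^c)$, but $\dist(I,I^c)=0$, so the hypothesis $\dist(J,K)>0$ fails, and indeed no $f\in L^1(\R)$ can satisfy $\hat f(s)=1/s$ on $I-I^c$, since that set accumulates at $0$ while $\hat f$ is bounded. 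The distance condition in the lemma is on the Borel sets, not on the spectra, so the uniform gap you propose between $I\cap\sigma(A_M)$ and $I^c\cap\sigma(B_M)$ — which you yourself flag as the main unresolved obstacle — does not by itself license the lemma; you must first replace one of the two sets by a deterministic set at positive distance from the other that produces the same spectral projection. The paper does exactly this, and more economically than your plan: it sets $Z:=\bigcup_{i=1}^N[E_{2i-1}+\eta,E_{2i}-\eta]$ and uses $\id_I(A_M)=\id_Z(A_M)$, which follows from \eqref{pf:eq1}, i.e.\ from the gap of $\sigma(A_M)$ around $\partial I$ built into the construction \eqref{def:AMBM}; then $\dist(Z,I^c)=\eta>0$ and the lemma applies verbatim, with no information about $\sigma(B_M)$ and no appeal to Lemma \ref{lemSSF} needed for this proposition. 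For \eqref{lem1:eq3} it uses $Z':=\R\backslash\bigcup_{i=1}^N[E_{2i-1}-\eta,E_{2i}+\eta]$ with $\id_{I^c}(A_M)=\id_{Z'}(A_M)$. I recommend you adopt this replacement: it removes the need for your two-sided, $M$-uniform analysis of $\sigma(B_M)$ near $\partial I$ entirely, after which the rest of your argument (norm convergence of the vectors defining the rank-one integrand, dominated convergence for Bochner integrals, and $\S^1$-continuity of the determinant) is exactly the paper's proof.
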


\begin{proof}
We set $Z:= \bigcup_{i=1}^N [E_{2i-1}+\eta, E_{2i} -\eta]$. 
Using \eqref{pf:eq1}, we obtain that
\beq
 \id_{I}(A_M)\id_{I^c}(B_M) = \id_Z(A_M) \id_{I^c}(B_M).
\eeq
We fix from now on $M\geq M_0$, where $M_0$ is given in Lemma \ref{lemSSF}. Since $\dist(Z,I^c)\geq \eta$, 
 Lemma \ref{lem:intrep} implies the integral representation
\beq\label{intrep2}
\id_Z(A_M) \id_{I^c}(B_M) = \int_\R \d t\,  e^{-it A_M} \id_Z(A_M)V \id_{I^c}(B_M) e^{itB_M} f(t),
\eeq
where $f\in L^1(\R)$ such that $\hat f (s)= \frac 1 s$ for all $s\in Z-I^c$. By definition, we have that $\partial Z\subset \rho(A)$ and $\partial I^c\subset \rho(B)$. Using \cite[Cor. 6.40]{MR2499016}, this implies that $\id_Z(A_M)\to \id_Z(A)$ and $\id_{I^c}(B_M)\to \id_{I^c}(B)$ strongly as $M\to\infty$. 
Since  $e^{it(\cdot)}$ is continuous, we obtain the strong convergence
\beq
\begin{aligned}
e^{-it A_M} \id_Z( A_M) \to e^{-itA} \id_Z(A) \quad\text{and}\quad
 e^{-it  B_M} \id_{I^c}( B_M)  \to e^{-it B}  \id_{I^c}( B)
\end{aligned}
\eeq
as $M\to\infty$, see e.g. \cite[Thm. 6.31]{MR2499016}. Applying the latter to $\phi$ gives that
\beq\label{lm:dom1}
 e^{-it A_M} \id_Z( A_M)|\phi\>\<\phi| \id_{I^c}(B_M) e^{it B_M} \to e^{-it A} \id_Z(A)|\phi\>\<\phi| \id_{I^c}(B) e^{itB} 
\eeq
in trace norm as $M\to\infty$. 
Additionally, we have the $M$-independent trace-class bound
\beq\label{lm:dom2}
\| e^{-itA_M} \id_I(A_M)V \id_{I^c}(B_M) e^{it B_M} f(t)\|_1 \leq \|\varphi\|^2 |f(t)|,
\eeq
where we recall that $f\in L^1(\R)$. Now, the convergence \eqref{lem1:eq1} follows from the integral representation \eqref{intrep2} and dominated convergence for Bochner integrals using \eqref{lm:dom1} and \eqref{lm:dom2}.

To prove \eqref{lem1:eq3} , we define $Z':= \R\backslash  \bigcup_{i=1}^N [E_{2i-1}-\eta, E_{2i} +\eta]$. Then,  \eqref{pf:eq1} implies
\beq
 \id_{I^c}(A_M)\id_{I}(B_M) = \id_{Z'}(A_M) \id_{I}(B_M),
\eeq
 where $\dist(I, Z')\geq \eta$. The rest of the proof follows along the same lines as above. 
 
The convergence  in \eqref{lem1:eq2} and \eqref{lem1:eq4} follow from continuity of Fredholm determinants with respect to trace norm, see \cite[Thm. 3.4]{MR2154153}.
\end{proof}

The last lemma establishes convergence of Fredholm determinants and we can approximate the l.h.s. of \eqref{thm1:eq2a} by finite-rank operators. To prove Theorem \ref{thm:main} we also need convergence of the r.h.s. of \eqref{thm1:eq2a} which we obtain from vague convergence of $\xi_M$ to $\xi$:

\begin{lemma}\label{lemma2}
Let $f\in C_c(\R)$, where $ C_c(\R)$ denotes the set of all compactly supported continuous functions. Then we have  the convergence
\beq
\lim_{M\to\infty}\int_\R \d x f(x) \xi_M(x) = \int_\R \d x f(x) \xi(x).
\eeq
\end{lemma}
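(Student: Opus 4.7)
The plan is to reduce the claim to polynomial test functions via Stone--Weierstrass, invoke the Lifshitz--Krein trace formula so as to rewrite $\int p\,\xi_M$ as the trace of a polynomial in $A_M$ and $B_M$, and then pass to the limit using the strong convergences $A_M\to A$ and $B_M\to B$ together with the norm convergence $I_M\phi\to\phi$.

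\emph{Uniform bounds and reduction to polynomials.} By construction $\|A_M\|\le\|A\|$ and $\|B_M\|\le\|A\|+\|\phi\|^2$, so the spectra of $A,B$ and of every $A_M,B_M$ lie in one common compact interval $[-R,R]$; consequently $\xi$ and every $\xi_M$ are supported in $[-R,R]$. Moreover, from $\xi_M\in[0,1]$ and the standard identity
\beq
\|\xi_M\|_{L^1}=\tr(B_M-A_M)=\tr V_M=\|I_M\phi\|^2\le\|\phi\|^2,
\eeq
the sequence $(\xi_M)$ is uniformly bounded in $L^1$. Given $f\in C_c(\R)$ and $\varepsilon>0$, Stone--Weierstrass furnishes a polynomial $p$ with $\sup_{[-R,R]}|f-p|<\varepsilon$, and the $L^1$ bound yields $\bigabs{\int(f-p)\xi_M}+\bigabs{\int(f-p)\xi}\le 2\varepsilon\|\phi\|^2$ uniformly in $M$. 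It is therefore enough to establish $\int p\,\xi_M\to\int p\,\xi$ for every polynomial $p$.

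\emph{Krein formula and convergence of polynomial traces.} Let $P$ be a polynomial antiderivative of $p$ and pick $\chi\in C_c^\infty(\R)$ with $\chi\equiv 1$ on $[-R-1,R+1]$. Then $(P\chi)(H)=P(H)$ for any self-adjoint $H$ with $\sigma(H)\subset[-R,R]$, and $(P\chi)'=p$ on $[-R,R]$, a set containing the supports of $\xi$ and of every $\xi_M$. Applying the Krein trace formula to $P\chi\in C_c^\infty(\R)$ therefore gives
\beq
\int_\R p(x)\,\xi(x)\,\d x=\tr\bigl(P(B)-P(A)\bigr),
\eeq
and analogously $\int p\,\xi_M=\tr(P(B_M)-P(A_M))$. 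Combining the telescoping identity with $V_M=\ket{I_M\phi}\bra{I_M\phi}$ and cyclicity of the trace yields
\beq
\tr\bigl(B_M^k-A_M^k\bigr)=\sum_{j=0}^{k-1}\<I_M\phi,\,B_M^{k-1-j}A_M^j I_M\phi\>,
\eeq
and an identical formula for $A,B$. The operators $A_M,B_M$ are uniformly norm-bounded and converge strongly to $A,B$, so by an elementary induction all their powers converge strongly; together with $I_M\phi\to\phi$ in norm, each inner product on the right converges to $\<\phi,B^{k-1-j}A^j\phi\>$. Summing with the coefficients of $P$ gives $\tr(P(B_M)-P(A_M))\to\tr(P(B)-P(A))$, which combined with the previous step closes the argument.

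\emph{Main obstacle.} No serious obstacle intervenes: the one subtle point is ensuring that the Stone--Weierstrass error is controlled uniformly in $M$, and that control is precisely what the trace-class (in fact rank-one) structure of the perturbation supplies via the uniform bound $\|\xi_M\|_{L^1}\le\|\phi\|^2$. Everything else is strong-operator bookkeeping on finite rank corrections.
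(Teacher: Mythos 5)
Your proof is correct, but it takes a genuinely different route from the paper. The paper's argument is a one-step reduction to dominated convergence: it invokes the Birman--Solomyak spectral averaging formula $\int_\R f(x)\xi_M(x)\,\d x=\int_0^1\d s\,\<I_M\phi,f(B_M(s))I_M\phi\>$ with $B_M(s)=A_M+s\,|I_M\phi\>\<I_M\phi|$, notes that $f(B_M(s))\to f(B(s))$ strongly for $f\in C_c(\R)$ by strong convergence of the operators, and dominates the integrand by $\|f\|_\infty\|\phi\|^2$. That handles arbitrary $f\in C_c(\R)$ directly, with no polynomial approximation, but leans on spectral averaging, which is tailored to the one-parameter rank-one family. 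You instead combine three ingredients: the uniform bound $\|\xi_M\|_{L^1}\le\|\phi\|^2$ (valid for any non-negative trace-class perturbation) to make the Stone--Weierstrass reduction uniform in $M$; the Krein trace formula (which for the finite-rank pairs $(A_M,B_M)$ is an elementary computation with the interlacing eigenvalues, and for $(A,B)$ is a standard theorem) to convert $\int p\,\xi$ into $\tr(P(B)-P(A))$; and the telescoping identity plus rank-one structure to reduce everything to inner products $\<I_M\phi,B_M^{k-1-j}A_M^jI_M\phi\>$, where strong convergence of uniformly bounded powers together with $\|I_M\phi-\phi\|\to0$ finishes the job. All steps check out (including the compact common spectral support $[-R,R]$, which justifies both the polynomial approximation and the cutoff $\chi$ in the Krein step). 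What your version buys is a more transparent path to generalization: for a higher-rank trace-class perturbation the same skeleton survives with the inner products replaced by traces against $V$ and trace-norm convergence, whereas the paper's spectral-averaging shortcut is what makes its proof so short in the rank-one case.
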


\begin{remark}
Related convergence results for the spectral shift function can be found in \cite{MR2596053, MR2947287}. Since we are considering rank-one perturbations only, we give here a short and elementary proof for completeness.
\end{remark}
 
\begin{proof}
Define $B_M(s):= A_M + s  |\id_M\phi\>\<\id_M\phi|$,  $B(s):= A+ s|\phi\>\<\phi|$  and let $f\in C_c(\R)$. 
The Birman-Solomyak formula implies
\beq
\int_\R \d x\, f(x) \xi_M(x) = \int_0^1 \d s\,  \<\id_M\phi,  f(B_M(s))\id_M\phi\>,
\eeq
see e.g. \cite{MR1443857}. Since $f\in C_c(\R)$, the strong convergence $B_M(s) \to B(s)$ holds as $M\to\infty$. Moreover, $\id_M\to \id$ stongly as well as $M\to\infty$ and we obtain that the integrand in the latter converges, i.e.
\beq
\<\id_M\phi, f(B_M(s) )\id_M\phi\>  \to  \<\phi, f( B(s) )\phi\>
\eeq
as $M\to\infty$. Furthermore, the integrand is bounded by
\beq
\<\id_M\phi, f(B_M(s))\id_M\phi\> \leq \|f\|_\infty \|\phi\|^2
\eeq
independently of $s\in (0,1)$. 
Thus, the lemma follows from dominated convergence. 
\end{proof}

\begin{proposition}\label{prop:conv2}
Let the set $I$ be given as in Theorem \ref{thm:main}. Then we have the convergence
\beq
\lim_{M\to\infty} \int_I \d x\int_{I^c}\d y\, \frac{\xi_M(x)\xi_M(y)}{(x-y)^2}
=
\int_I \d x\int_{I^c}\d y\, \frac{\xi(x)\xi(y)}{(x-y)^2}.
\eeq
\end{proposition}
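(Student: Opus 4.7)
The strategy is to compare both integrals with one involving a common continuous compactly supported kernel, and then iterate the vague convergence of Lemma \ref{lemma2} in the two variables. As preliminaries, I would establish two uniform features. First, there is a common compact set $K\subset\R$ containing the supports of $\xi$ and of every $\xi_M$ with $M\ge M_1$: this follows from boundedness of $A$ and $B$, the spectral cut-off in \eqref{def:AMBM} giving $\sigma(A_M)\subset\sigma_\varepsilon(A)$, and the uniform bound $\|I_M V I_M\|\le\|V\|$. Second, there exists $\eta'>0$ and some $M_1\in\N$ such that $\xi=0$ and $\xi_M=0$ (for all $M\ge M_1$) on the open set $U:=\{x\in\R:\dist(x,\partial I)<\eta'\}$. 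For $\xi$ this comes from \eqref{thm:assump} together with $\partial I\subset\rho(A)\cap\rho(B)$ and constancy of $\xi$ on connected components of the resolvent set. For $\xi_M$ it adapts the argument of Lemma \ref{lemSSF} to a two-sided neighbourhood of each $E_i$, using the gap \eqref{pf:eq1} of $A_M$, rank-one interlacing for the pair $A_M,B_M$, and the transfer of the fixed gap of $B$ around each $E_i\in\rho(B)$ to $B_M$ for $M$ large.

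The key geometric observation is then that whenever $x\in I\setminus U$ and $y\in I^c\setminus U$, any line segment from $x$ to $y$ must pass through $\partial I$, so $|x-y|\ge\eta'$. Thus on the effective support $((I\setminus U)\cap K)\times((I^c\setminus U)\cap K)$ of the integrand, the kernel $(y-x)^{-2}$ is continuous and uniformly bounded. I can therefore pick a cut-off function $\tilde g\in C_c(\R^2)$ which coincides with $\mathbf{1}_I(x)\mathbf{1}_{I^c}(y)/(y-x)^2$ on this effective support, so that for all $M\ge M_1$,
\[
\int_I\int_{I^c}\frac{\xi_M(x)\xi_M(y)}{(y-x)^2}\,\d x\,\d y=\int_\R\int_\R\xi_M(x)\xi_M(y)\tilde g(x,y)\,\d x\,\d y,
\]
and the analogous identity holds for $\xi$ in place of $\xi_M$.

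It then remains to show that for every $f\in C_c(\R^2)$,
\[
\iint\xi_M(x)\xi_M(y)f(x,y)\,\d x\,\d y\;\longrightarrow\;\iint\xi(x)\xi(y)f(x,y)\,\d x\,\d y.
\]
Setting $G_M(y):=\int\xi_M(x)f(x,y)\,\d x$ and $G(y):=\int\xi(x)f(x,y)\,\d x$, one has $G_M\to G$ pointwise by Lemma \ref{lemma2} applied, for each fixed $y$, to the test function $x\mapsto f(x,y)\in C_c(\R)$. The $G_M$ are uniformly bounded by $\|f\|_\infty\|\phi\|^2$ and vanish outside the compact projection of $\supp f$ onto the $y$-axis, while $G\in C_c(\R)$; hence $G_M\to G$ in $L^1(\R)$ by dominated convergence. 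Splitting $\int\xi_M G_M\,\d y=\int\xi_M G\,\d y+\int\xi_M(G_M-G)\,\d y$, the first term converges to $\int\xi G\,\d y$ by another application of Lemma \ref{lemma2} (to $G\in C_c(\R)$), and the second is bounded in absolute value by $\|G_M-G\|_{L^1(\R)}\to 0$. The main technical obstacle will be the second preliminary step, namely upgrading Lemma \ref{lemSSF} to a two-sided vanishing neighbourhood of $\partial I$ uniform in $M$; everything else reduces to routine applications of dominated convergence and vague convergence.
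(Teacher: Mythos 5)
Your proposal is correct and follows essentially the same route as the paper: the paper's proof is a one-line reduction to the vague convergence of Lemma \ref{lemma2} combined with the vanishing of $\xi_M$ near $\partial I$ from Lemma \ref{lemSSF}, and you have supplied exactly the details (uniform compact support, replacement of the kernel by a $C_c(\R^2)$ function off the singular diagonal, and the iterated/dominated-convergence upgrade of vague convergence to product test functions). One remark that removes what you flag as the main technical obstacle: you do not need to upgrade Lemma \ref{lemSSF} to a two-sided neighbourhood of $\partial I$. The singularities of $(y-x)^{-2}$ on $I\times I^c$ sit at the diagonal points $(E_j,E_j)$, and at a left endpoint of a component of $I$ the variable $x\in I$ approaches $E_j$ from the right, while at a right endpoint the variable $y\in I^c$ approaches $E_j$ from the right; in either case the relevant variable lies in $[E_j,E_j+\eta)$, where $\xi_M$ already vanishes by Lemma \ref{lemSSF}, so $|x-y|\ge\eta$ on the effective support and the one-sided statement suffices. (The two-sided version is nevertheless provable by the same contradiction argument after shrinking $\eta$, so your route is not wrong, merely longer.)
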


\begin{proof}
This follows from the vague convergence in Lemma \ref{lemma2} using that $\xi_M(x)= 0$ for all $x\in \bigcup_{i=1}^{2N}[E_i,E_i+\eta)$ and all $M\geq M_0$ which was proved in Lemma \ref{lemSSF}. 
\end{proof}

\begin{proof}[Proof of Lemma \ref{lemSSF}]
Fix  $i\in\{1,...,2N\}$. From our assumptions we know that $(E_i-\delta,E_i+\delta)\subset \rho(A)\cap\rho(B)$ and that $\xi(E_i)=0$. Since the spectral shift function is constant on any connected component of $\rho(A)\cap\rho(B)\cap \R$, we obtain that 
\beq\label{pf:ssf:eq1}
\xi(x)=0 \qquad\text{for all} \qquad x\in (E_i-\delta,E_i+\delta).
\eeq

Using that $A_M$ differs from $B_M$ by a rank-one perturbation and that $(E_i-\eta,E_i+\eta)\subset \rho(A_M)$ for all $M\in\N$, it follows that at most one eigenvalue of $B_M$ lies in the interval $(E_i-\eta,E_i+\eta)$. 
Assume by contradiction that there exists a sequence $( b_{M_n})_{n\in\N}$ of eigenvalues of $B_{M_n}$ such that $ b_{M_n}\in (E_i-\eta,E_i+\eta)$ and $ b_{M_n}\to  b\in  (E_i-\eta,E_i+\eta)$ as $n\to\infty$. Since $A_{M_n}$ and $B_{M_n}$ have discrete spectrum, it follows that $\xi_{M_n}(x)=1$ for all $x\in (E_i-\eta,  b_{M_n})$. The latter and Lemma \ref{lemma2} then yield for any $ f\in C_c((E_i-\eta,E_i+\eta))$ with $0\leq f \leq 1$ and $f\neq 0$ that
\begin{align}
\int_{E_i-\eta}^{E_i+\eta} \d x\, \xi(x) 
&\geq  \lim_{n\to\infty}\int_{E_i-\eta}^{E_i+\eta} \d x\, f(x) \xi_{M_n}(x)\notag \\
&\geq 
\liminf_{n\to\infty} \int_{E_i-\eta}^{ b_{M_n}} \d x\, f(x) \xi_{M_n}(x)
 = \int_{E_i-\eta}^b \d x\, f(x)>0. 
\end{align}
This contradicts \eqref{pf:ssf:eq1} and therefore,  for all $M$ big enough, $[E_i,E_i+\eta)\subset \rho(B_M)$ and $\xi_M=0$ for a set of positive Lebesgue measure in $[E_i,E_i+\eta)$. The spectral shift function is constant on any connected component of $\rho(A_M)\cap \rho(B_M)\cap \R$. Hence, it follows in particular that $\xi_M(x)=0$ for all $x\in [E_i,E_i+\eta)$. 
\end{proof}

\subsection{Proof of Theorem \ref{thm:main}, Corollary \ref{corollary}, Proposition \ref{prop1} and Theorem~\ref{prop2}}

\begin{proof}[Proof of Theorem \ref{thm:main}]
For the moment we additionally assume that $A,B$ are bounded. Let $A_M$ and $B_M$ be the finite-rank operators defined in \eqref{def:AMBM}. 
We fix $M\geq M_0$ where $M_0$ is given in Lemma \ref{lemSSF}. 
Then, the inclusion \eqref{pf:eq1} and Lemma \ref{lemSSF} imply that $\partial I\subset \rho(A_M)\cap \rho(B_M)$ and $\xi_M(x)=0$ for all $E\in\partial I$. 
Hence, the assumptions of Lemma \ref{main:lem} are satisfied and we obtain 
\beq\label{thm1:pr:eq2aa}
-\ln \left(\det \big(\id - \id_{I}(A_M) \id_{I^c}(B_M)\id_{I}(A_M)\big) \right)
=
\int_I \d x \int_{I^c} \d y\, \frac{\xi_M(x)\xi_M(y)}{(y-x)^2}.
\eeq

The theorem follows from taking the limit $M\to\infty$ in \eqref{thm1:pr:eq2aa} using Proposition~\ref{prop:conv1} and Proposition~\ref{prop:conv2}.

This proves the result for bounded operators. For unbounded operators which are bounded from below, let $c$ be constant such that $A> c$ and $B> c$.  Then, we apply the latter to their self-adjoint resolvents  $1/(A- c)$ and $1/(B-c)$ which are bounded operators. 
 \end{proof}

\begin{proof}[Proof of Theorem \ref{prop2}]
The proof of Theorem  \ref{prop2} follows along the same lines as  Theorem \ref{thm:main}:
For matrices it follows from  Lemma \ref{main:lem2}. The general Theorem \ref{prop2} follows from applying the result for matrices to the finite-rank approximation introduced in Section \ref{approx} and then taking the limit. In doing so one has to adapt Lemma \ref{lemSSF}. 
\end{proof}

\begin{proof}[Proof of Proposition \ref{prop1}]
Since $B-A$ is rank one and $\partial I\subset \rho(A)\cap\rho(B)$, it follows from Lemma \ref{lem:intrep}  that $\id_I(A) - \id_I(B)\in\S^1$ and its trace is given by
 \beq
 \tr (\id_I(A) - \id_I(B) ) = \ker (\id_I(A) - \id_I(B)-\id) - \ker (\id_I(A) - \id_I(B) + \id),
\eeq 
see \cite[Prop. 3.1]{MR1262254}. If $\tr (\id_I(A) - \id_I(B) ) >0$, this implies that $\ker (\id_I(A) - \id_I(B)-\id)\neq 0$. Let $\varphi\in \ker (\id_I(A) - \id_I(B)-\id)$. Then it is straight forward to check that $\varphi \in \ran \id_I(A)\cap \ker  \id_I(B)$ and hence $1\in \sigma(  \id_I(A)  \id_{I^c}(B)\id_I(A))$. This implies \eqref{prop1:eq1}. The identity \eqref{prop1:eq2} follows along the same lines. 
\end{proof}

\begin{proof}[Proof of Corollary \ref{corollary}]
This follows for matrices along the same lines as Lemma \ref{main:lem} using Lemma \ref{Fredholm:Repr}. By approximating general operators by matrices as done in Section \ref{approx},  the general result follows from the matrix case using Proposition \ref{prop:conv1} and Proposition~\ref{prop:conv2}.
\end{proof}

\subsection{Proof of Corollary \ref{subspace:thm} and Theorem \ref{subspace:thm2}}

\begin{proof}[Proof of Corollary \ref{subspace:thm}]
Since $\dist(\Sigma,\sigma(A)\backslash \Sigma)>0$ and $A$ is bounded, $\Sigma$ is of the form 
$\Sigma= \bigcup_{i=1}^{2N}[E_{2i-1},E_{2i}]$ for some $N\in \N$ and $E_1\leq E_2 < E_3\leq E_4< ... \leq E_{2N}$. 

Since $\Sigma_\delta\backslash \Sigma\subset \rho(A)$, we obtain that $\id_\Sigma(A) = \id_{\Sigma_\delta}(A)$ and therefore 
\beq
\id_\Sigma(A) - \id_{\Sigma_\delta}(B) = \id_{\Sigma_\delta}(A) - \id_{\Sigma_\delta}(B).
\eeq
The assumption $\|\phi\|^2<\delta$ yields
\beq\label{pf:subspace:eq30}
I_\delta:=\Sigma_\delta\backslash \Sigma_{\|\phi\|^2}\subset \rho(A)\cap \rho(B).
\eeq
Hence, Lemma \ref{lem:intrep} implies that $\id_{\Sigma_\delta}(A) \id_{\R\backslash \Sigma_\delta}(B)\in \S^1 $ and $\id_{\R\backslash \Sigma_\delta}(A) \id_{\Sigma_\delta}(B)\in S^1$  and  
\beq\label{pf:subspace:eq10}
(\id_{\Sigma_\delta}(A) - \id_{\Sigma_\delta}(B))^2\in \S^1
\eeq
by identity \eqref{proj-identity}.
Moreover, we obtain that
\beq\label{pf:subspace:eq3}
\xi(x)=0\qquad \text{ for all} \qquad x\in I_\delta.
\eeq
This follows for example from the Birman-Solomyak formula \cite{MR1443857}
\beq
\int_{I_\delta} \d x\, \xi(x) = \int_0^1 \d \lambda \,\<\phi, \id_{I_\delta} (A+ s|\phi\>\<\phi|) \phi\>. 
\eeq
Since $I_\delta \subset \rho(A+ s|\phi\>\<\phi|)$ for all $s\in [0,1]$ as well, we obtain that the latter integral is $0$ and $\xi(x) =0$ for Lebesgue almost all $x\in I_\delta$. Using that the spectral shift function is constant on connected components of $\rho(A)\cap\rho(B)\cap \R$, \eqref{pf:subspace:eq3} holds for all $E\in I_\delta$. 

It follows from \eqref{pf:subspace:eq10} that the equivalence
\beq\label{equivalence}
\|\id_{\Sigma_\delta}(A) - \id_{\Sigma_\delta}(B)\|<1 \quad \text{if and only if}\quad \det\big( \id - (\id_{\Sigma_\delta}(A) - \id_{\Sigma_\delta}(B))^2 \big) > 0
\eeq
holds. Now, \eqref{pf:subspace:eq30} and \eqref{pf:subspace:eq3} allow for applying Theorem \ref{thm:main} which gives
\begin{align}
\det(\id-(\id_{\Sigma_\delta}(A) - \id_{\Sigma_\delta}(B))^2)
&= \exp\Big(-2\int_{\Sigma_{\delta}} \dx \int_{\R\backslash \Sigma_{\delta}} \dy\, \frac{\xi(x)\xi(y)}{(x-y)^2}\Big)\notag\\
&= \exp\Big(-2\int_{\Sigma_{\|\phi\|^2}} \dx \int_{\R\backslash \Sigma_{\delta}} \dy\, \frac{\xi(x)\xi(y)}{(x-y)^2}\Big)
\end{align}
where the last equality follows from \eqref{pf:subspace:eq3}. Since $\dist(\Sigma_{\delta},\Sigma_{\|\phi\|^2})>0$ the latter integral is finite and the l.h.s. positive. Hence, \eqref{equivalence} gives the result. 
\end{proof}

\begin{proof}[Proof of Theorem \ref{subspace:thm2}]
The identity \eqref{proj-identity} implies that 
\beq
\|\id_I(A) - \id_I(B)\| \leq \max\{ \| \id_I(A) \id_{I^c}(B)\id_I(A)\|^{1/2},  \| \id_{I^c}(A)\id_I(B) \id_{I^c}(A) \|^{1/2} \}.
\eeq
Hence, it suficies to prove the result for the latter products. We perform the argument for $\| \id_I(A) \id_{I^c}(B)\id_I(A)\|$ only. The other term is estimated analogously. 
We define the spectral measures
\beq
\mu_A(\,\cdot\,) :=\<\phi, \id_{(\cdot)}(A) \phi\>\quad\text{and}\quad \mu_B(\,\cdot\,):=\<\phi, \id_{(\cdot)}(B) \phi\>
\eeq
and assume w.l.o.g. that $\phi$ is cyclic with respect to $A$. Hence, the spectral theorem in multiplication operator form gives that the mappings
\beq
\begin{aligned}
&U^*:\H\to L^2(\sigma(A), \d \mu_A),\quad h(A)\phi\mapsto h \\
&V^*:\H\to L^2(\sigma(B), \d \mu_B),\quad h(B)\phi\mapsto h 
\end{aligned}
\eeq
are well-defined unitary operators. 
Then we have  
\beq
U^*\id_{I}(A)\id_{I^c}(B)V = K,
\eeq
where $K:L^2(\sigma(B)\cap I^c, \d \mu_B)\to L^2(\sigma(A)\cap I, \d \mu_A)$ is the integral operator with kernel 
\beq
K(x,y) = \frac{1}{x-y}\qquad \text{for}\ x\in I,\ y\in I^c,
\eeq
see e.g. \cite[Thm. 2.1]{MR2540995}.
Using this, we obtain 
\beq
\|\id_{I}(A)\id_{I^c}(B)\id_{I}(A)\|   \leq \| K K^*\|.
\eeq
For $h\in L^2(\sigma(A)\cap I, \d \mu_A)$ we compute 
\begin{align}
\<K^* h, K^* h\> 
&= 
\Big|\int_{I^c\cap \sigma(B)} \d \mu_B(x) \int_{I\cap\sigma (A)} \d \mu_A(y) \frac { h(y)} {x-y} \int_{I\cap\sigma (A)} \d \mu_A(z) \frac { h(z)} {x-z} \Big|\notag\\
& \leq \frac 1 {\delta^2} \, \<\phi, \id_{I^c\cap \sigma(B)}(B) \phi\> \,  \<\phi, \id_{I\cap\sigma (A)}(A) \phi\> \,  \| h\|^2_{L^2(I^c \cap \sigma(B), \d \mu_B)}\notag\\
&\leq  \frac 1 {\delta^2} \, \|\phi\|^4 \| h\|^2_{L^2(I^c\cap \sigma(B), \d \mu_B)},
\end{align}
where we used that $\dist(I\cap \sigma(A),I^c\cap\sigma (B))\geq \delta$ and the Cauchy-Schwarz inequality. This gives the assertion. 
\end{proof}

\begin{appendix}
\section{Rank-one analysis and residues of resolvents}

In the appendix $A,B\in \C^{M\times M}$ are self-adjoint and $B:=A+ |\phi\>\<\phi|$ where $\phi\in \C^M$ and $M\in\N$. Moreover, we assume $\phi$ to be cyclic with respect to $A$. We use the notation of Section \ref{subsec:matrices}.

\begin{lemma}\label{cor:det}
 Let $j,k\in\N$. We obtain the identities
  \begin{align}
 \left|\<\varphi_j,\phi\>\right|^2 = \left( b_j- a_j\right)\prod_{\substack{l=1\\l\neq j}}^M
				    \frac{ \left( b_l- a_j\right)}{\left( a_l- a_j\right)}
\end{align}				
and
\begin{align}    
	 \left|\<\psi_k,\phi\>\right|^2&= \left( a_k- b_k\right)
 \prod_{\substack{l=1\\l\neq k}}^M\frac{ \left( a_l- b_k\right)}{ \left( b_l- b_k\right)}.
  \end{align}
\end{lemma}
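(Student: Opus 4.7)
The plan is to derive both identities by reading off residues of the Weyl functions
\[
F(z) := \<\phi,(A-z)^{-1}\phi\> = \sum_{j=1}^M \frac{|\<\varphi_j,\phi\>|^2}{a_j - z}, \qquad G(z) := \<\phi,(B-z)^{-1}\phi\> = \sum_{k=1}^M \frac{|\<\psi_k,\phi\>|^2}{b_k - z},
\]
so that the two quantities in the lemma are, up to a sign, precisely $\mathrm{Res}_{z=a_j} F$ and $\mathrm{Res}_{z=b_k} G$.

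The central step is to express $1 + F(z)$ as an explicit rational function of $z$. Starting from the rank-one resolvent formula
\[
(B-z)^{-1} = (A-z)^{-1} - \frac{(A-z)^{-1}|\phi\>\<\phi|(A-z)^{-1}}{1 + F(z)},
\]
one reads off that the zeros of $z\mapsto 1 + F(z)$ coincide with the eigenvalues of $B$. Cyclicity of $\phi$ with respect to $A$ guarantees $|\<\varphi_j,\phi\>|^2 > 0$ for each $j$, so that the simple poles of $1 + F$ are precisely the $a_j$. Combined with the asymptotic $1 + F(z) \to 1$ as $|z|\to\infty$, these divisor data force
\[
1 + F(z) = \frac{\prod_{k=1}^M (b_k - z)}{\prod_{l=1}^M (a_l - z)}.
\]
Taking the residue at $z = a_j$ on both sides and pulling the $k=j$ factor out of the numerator yields the first identity at once.

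For the second formula I would first derive the functional relation $F(z) - G(z) = F(z)G(z)$ by evaluating the first resolvent identity $(A-z)^{-1} - (B-z)^{-1} = (A-z)^{-1}|\phi\>\<\phi|(B-z)^{-1}$ as a quadratic form in $\phi$. This rearranges to
\[
1 - G(z) = \frac{1}{1 + F(z)} = \frac{\prod_{l=1}^M (a_l - z)}{\prod_{k=1}^M (b_k - z)},
\]
and the residue at $z = b_k$ produces the analogous product formula for $|\<\psi_k,\phi\>|^2$.

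The only delicate point I expect is confirming that the divisor of $1 + F$ (and hence of $1 - G$) is exactly as described, i.e.\ that none of the inner products $\<\varphi_j,\phi\>$ or $\<\psi_k,\phi\>$ vanishes. For $A$ this is precisely the cyclicity hypothesis on $\phi$; for $B$ it follows a posteriori, since the identity $1 - G = 1/(1+F)$ forces $G$ to have a genuine simple pole at each $b_k$. Everything else is routine residue bookkeeping together with the strict interlacing \eqref{ineq:eigenvalues}, which ensures that all factors in the denominators are nonzero.
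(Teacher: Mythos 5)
Your proposal is correct and lands on the same key identity as the paper, namely the explicit rational form $1+F(z)=\prod_{k}(b_k-z)\big/\prod_{l}(a_l-z)$ of the perturbation determinant, from which both formulas follow by residue calculus at $a_j$ and $b_k$. The only genuine difference is how you arrive at that identity: the paper gets it in one line from the rank-one determinant formula $\det\big(\id+V(A-z)^{-1}\big)=1+\<\phi,(A-z)^{-1}\phi\>$ together with multiplicativity, $\det\big(\id+V(A-z)^{-1}\big)=\det(B-z)\det\big((A-z)^{-1}\big)$, whereas you reconstruct the rational function from its divisor (simple poles at the $a_j$ by cyclicity, zeros at the $b_k$ by the Aronszajn--Krein criterion, normalization at infinity). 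Your route requires the small extra bookkeeping that $1+F$ has exactly $M$ zeros --- which does follow, since its numerator $\prod_l(a_l-z)+\sum_j|\<\varphi_j,\phi\>|^2\prod_{l\neq j}(a_l-z)$ is a polynomial of degree exactly $M$ --- but it has the advantage of making the non-vanishing of $\<\psi_k,\phi\>$ transparent rather than a posteriori. Your derivation of the second identity via $F-G=FG$, i.e. $1-G=1/(1+F)$, is precisely the $A\leftrightarrow B$ symmetry the paper uses implicitly. One small remark: carried out to the end, your residue computation yields $|\<\psi_k,\phi\>|^2=(b_k-a_k)\prod_{l\neq k}(a_l-b_k)/(b_l-b_k)$, so the prefactor $(a_k-b_k)$ in the stated lemma is off by a sign (a positivity check with $M=1$ confirms this); the discrepancy is harmless because the lemma is only ever invoked inside absolute values in the proof of Lemma \ref{prod:thm1}.
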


\begin{proof}
Let $z\in\C\backslash \sigma(A)$. The identities
\begin{align}
 \left|\<\varphi_j,\phi\>\right|^2
 =\lim_{z\to a_j} \left( a_j-z\right)\<\phi,\frac 1 {A-z} \phi\>
 =\lim_{z\to a_j} \left( a_j-z\right)\big(1 + \<\phi,\frac 1 {A-z} \phi\>\big)\label{appendix:eq1}
\end{align}
hold. On the other hand, since $(B-z) (A-z)^{-1} = \id + V (A-z)^{-1} $ and $V=|\phi\>\<\phi|$ is a rank-one perturbation, we obtain 
\begin{align}
1 + \<\phi,\frac 1 {A-z} \phi\> = \det \big(\id + V (A-z)^{-1}\big)&= \det \big(B-z\big)\det\big( (A-z)^{-1}\big) \notag \\
&= 
\prod_{n=1}^N (b_n -z)(a_n-z)^{-1}.\label{appendix:eq2}
\end{align}
\eqref{appendix:eq1} and \eqref{appendix:eq2} together imply the assertion. 
\end{proof}

\end{appendix}

\section*{Acknowledgements}
M.G. thanks Alexander Pushnitski, Peter M\"uller and especially Adrian Dietlein for many illuminating discussions on this topic.

\newcommand{\etalchar}[1]{$^{#1}$}


\begin{thebibliography}{[25]}
\providecommand{\url}[1]{{\tt #1}}
\providecommand{\urlprefix}{URL }
\providecommand{\eprint}[2][]{e-print {#2}}

\bibitem[1]{MR3079864}
S.~Albeverio and A.~K. Motovilov, Sharpening the norm bound in the subspace
  perturbation theory, {\em Complex Anal. Oper. Theory\/} {\bf 7}, 1389--1416
  (2013).


\bibitem[2]{MR1262254}
J.~Avron, R.~Seiler and B.~Simon, The index of a pair of projections, {\em J.
  Funct. Anal.\/} {\bf 120}, 220--237 (1994).


\bibitem[3]{MR1477662}
R.~Bhatia, {\em Matrix analysis\/}, Graduate Texts in Mathematics, vol. 169,
  Springer, New York, 1997.


\bibitem[4]{DGM}
A.~Dietlein, M.~Gebert and P.~M\"uller, Bounds on the effect of perturbations of 
continuum random Schr\"odinger operators and applications,  
\textit{J. Spectr. Theory}, to appear, 
\href{https://arxiv.org/abs/1701.02956}{arXiv:1701.02956}. 


\bibitem[5]{MR1469319}
P.~A. Deift, A.~R. Its and X.~Zhou, A {R}iemann-{H}ilbert approach to
  asymptotic problems arising in the theory of random matrix models, and also
  in the theory of integrable statistical mechanics, {\em Ann. of Math. (2)\/}
  {\bf 146}, 149--235 (1997).


\bibitem[6]{MR3341967}
R.~L. Frank and A.~Pushnitski, The spectral density of a product of spectral
  projections, {\em J. Funct. Anal.\/} {\bf 268}, 3867--3894 (2015).


\bibitem[7]{MR3405952}
M.~Gebert, The asymptotics of an eigenfunction-correlation determinant for
  {D}irac-{$\delta$} perturbations, {\em J. Math. Phys.\/} {\bf 56}, 072110-1-18
  (2015).


\bibitem[8]{MR3212877}
M.~Gebert, H.~K\"uttler and P.~M\"uller, Anderson's orthogonality catastrophe,
  {\em Commun. Math. Phys.\/} {\bf 329}, 979--998 (2014).


\bibitem[9]{artAOC2015GKMO}
M.~Gebert, H.~K{\"u}ttler, P.~M{\"u}ller and P.~Otte, The exponent in the
  orthogonality catastrophe for {F}ermi gases, {\em J. Spectr. Theory\/} {\bf
  6}, 643--683 (2016).


\bibitem[10]{MR2947287}
F.~Gesztesy and R.~Nichols, An abstract approach to weak convergence of
  spectral shift functions and applications to multi-dimensional
  {S}chr\"odinger operators, {\em J. Spectr. Theory\/} {\bf 2}, 225--266
  (2012).


\bibitem[11]{MR2596053}
P.~D. Hislop and P.~M{\"u}ller, {The spectral shift function for compactly
  supported perturbations of {S}chr{\"o}dinger operators on large bounded
  domains}, {\em Proc. Amer. Math. Soc.\/} {\bf 138}, 2141--2150 (2010).


\bibitem[12]{MR1991758}
V.~Kostrykin, K.~A. Makarov and A.~K. Motovilov, On a subspace perturbation
  problem, {\em Proc. Amer. Math. Soc.\/} {\bf 131}, 3469--3476 (2003).


\bibitem[13]{artAOC2014KOS}
H.~K{\"u}ttler, P.~Otte and W.~Spitzer, Anderson's orthogonality catastrophe
  for one-dimensional systems, {\em Ann. Henri Poincar\'e\/} {\bf 15},
  1655--1696 (2014).


\bibitem[14]{MR3376020}
H.~K. Kn\"orr, P.~Otte and W.~Spitzer, Anderson's orthogonality catastrophe in
  one dimension induced by a magnetic field, {\em J. Phys. A\/} {\bf 48},
  325202-1-17 (2015).


\bibitem[15]{MR2540995}
C.~Liaw and S.~Treil, Rank one perturbations and singular integral operators,
  {\em J. Funct. Anal.\/} {\bf 257}, 1947--1975 (2009).


\bibitem[16]{MR3420326}
K.~A.~Makarov and A.~Seelmann, {The length metric on the set of orthogonal projections and new
              estimates in the subspace perturbation problem}, {\em J. Reine Angew. Math.\/}
  {\bf 708}, 1--15 (2015).


\bibitem[17]{RevModPhys.62.929}
K.~Ohtaka and Y.~Tanabe, {Theory of the soft-x-ray edge problem in simple
  metals: historical survey and recent developments}, {\em Rev. Mod. Phys.\/}
  {\bf 62}, 929--991 (1990).


\bibitem[18]{MR1694668}
P.~Otte, An integral formula for section determinants of semi-groups of linear
  operators, {\em J. Phys. A\/} {\bf 32}, 3793--3803 (1999).


\bibitem[19]{MR2121573}
P.~Otte, An adiabatic theorem for section determinants of spectral projections,
  {\em Math. Nachr.\/} {\bf 278}, 470--484 (2005).


\bibitem[20]{MR0493421}
M. Reed and B. Simon,
 {\em Methods of modern mathematical physics. {IV}. {A}nalysis of
  operators},
Academic Press, New York, 1978.


\bibitem[21]{MR3231246}
A.~Seelmann, Notes on the {$\sin 2\Theta$} theorem, {\em Integral Equations
  Operator Theory\/} {\bf 79}, 579--597 (2014).


\bibitem[22]{MR1443857}
B.~Simon, Spectral averaging and the {K}rein spectral shift, {\em Proc. Amer.
  Math. Soc.\/} {\bf 126}, 1409--1413 (1998).


\bibitem[23]{MR2154153}
B.~Simon, {\em {Trace ideals and their applications}\/}, 2nd ed., {Mathematical Surveys
  and Monographs}, vol. 120, Amer. Math. Soc., Providence, RI,
  2005.


\bibitem[24]{MR2499016}
G.~Teschl, {\em {Mathematical methods in quantum mechanics}\/}, {Graduate
  Studies in Mathematics}, vol.~99, Amer.
  Math. Soc., Providence,
  RI, 2009.
    

\bibitem[25]{zbMATH06257273}
H. Weyl, {\em The classical groups. Their invariants and representations},
Princeton University Press, Princeton, NJ, 1939.  


\bibitem[26]{MR1180965}
D.~R. Yafaev, {\em Mathematical scattering theory. General theory\/}, Amer.
  Math. Soc., Providence, RI, 1992.

\end{thebibliography}
\end{document}